\documentclass[oneside, 12pt]{amsart}

\usepackage[margin=1in]{geometry}
\usepackage{amsthm}
\usepackage{enumitem}  
\usepackage[OT2,T1]{fontenc}
\usepackage{amscd, amssymb, enumitem, amsmath, mathrsfs, amsfonts, amsaddr}
\usepackage{url}
\usepackage{tikz}
\usepackage{fullpage}
\usepackage{microtype}
\usetikzlibrary{decorations.pathreplacing}
\usepackage[backref=page]{hyperref}
\usepackage{hyperref}
\usepackage[utf8]{inputenc}

\DeclareSymbolFont{cyrletters}{OT2}{wncyr}{m}{n}
\DeclareMathSymbol{\Sha}{\mathalpha}{cyrletters}{"58}

\setlength{\textwidth}{17cm}
\setlength{\textheight}{9in}
\setlength{\oddsidemargin}{-0.0in}

\newtheorem{theorem}{Theorem}[section]

\newtheorem{lemma}[theorem]{Lemma}

\newtheorem*{proposition*}{Proposition}
\newtheorem{corollary}[theorem]{Corollary}
\newtheorem*{question}{Question 1}

\theoremstyle{definition}
\newtheorem{definition}[theorem]{Definition}
\newtheorem{example}[theorem]{Example}
\newtheorem{conjecture}[theorem]{Conjecture}

\newtheorem{remark}[theorem]{Remark}
\newtheorem{emptyremark}[theorem]{}
\newtheorem*{acknowledgement}{Acknowledgement}

\theoremstyle{remark}

\title{Torsion and twists of abelian varieties}
\author{Mentzelos Melistas}
\date{\today}

\address{Charles University, Faculty of Mathematics and Physics, Department of
Algebra, Sokolov\-sk\' a 83, 18600 Praha~8, Czech Republic}

\address{University of Twente, Department of Applied Mathematics, Drienerlolaan 5, 7522 NB Enschede, The Netherlands}

\begin{document}

\maketitle

\begin{abstract}
    In this article, we investigate the possible torsion subgroups of twists of abelian varieties with good reduction. As an application, we prove a theorem concerning ramified primes over any quadratic extension where odd-order torsion growth is achieved. In particular, we show that for every rational elliptic curve and every imaginary quadratic field not equal to $\mathbb{Q}(\sqrt{-3})$ satisfying the Heegner hypothesis no odd-order torsion growth can occur.
\end{abstract}

\section{Introduction}

Let $K$ be a number field of degree $d$ and let $A/K$ be an abelian variety of dimension $g$. The Mordell-Weil theorem states that the set $A(K)$ of $K$-rational points of $A/K$ is a finitely generated abelian group and, therefore, it decomposes as $$A(K) \cong A(K)_{\text{tors}} \oplus \mathbb{Z}^r,$$ where $A(K)_{\text{tors}}$ is a finite subgroup called the torsion subgroup and $r$ is a non-negative integer. The problem of producing bounds for the torsion subgroup $A(K)_{\text{tors}}$ has a very long and rich history. When $g=1$ and $d \leq 2$, i.e., when $K$ is either $\mathbb{Q}$ or a quadratic number field and $A/K$ is an elliptic curve defined over $K$, we have a complete classification of the possible torsion subgroups that can occur (see \cite{maz}, \cite{kamienny}, and \cite{kenkumomose}). For higher degree $d$, the Uniform Boundedness Theorem, due to Merel \cite{merel1996}, tells us that for every $d>0$ there exists a bound $B(d)$ such that for every number field $K$ of degree $d$ and every elliptic curve $E/K$ we have that $$|E(K)_{\text{tors}}| < B(d).$$ For higher dimensional abelian varieties, a lot less is known. For example, even for abelian surfaces $A/\mathbb{Q}$ there is no known uniform bound for the group $A(\mathbb{Q})_{\text{tors}}$.

Given an abelian variety $A/K$, it is natural to ask how the torsion subgroup $A(K)_{\text{tors}}$ changes upon quadratic twisting. More specifically, let $K(\sqrt{d})$ be a quadratic extension of $K$, for some element $d$ of $K$, and denote by $A^d/K$ the quadratic twist of $A/K$ by $d$ (See Example \ref{examplequadratictwists} below for the definition). One can then ask the following.
\begin{question}\label{question1}
If $A(K)_{\text{tors}} \neq A^d(K)_{\text{tors}}$, then are there any properties (depending on $A/K$) that the extension $K(\sqrt{d})/K$ must necessarily have? 
\end{question}
\noindent We also refer the reader to \cite{silverman1984}, which contains related results, including the statement that over a number field, an abelian variety has only finitely many twists that admit a $K$-rational torsion point of order strictly greater than $2$.

In this article, we will primarily be interested in the relationship between the primes of $K$ that ramify in $K(\sqrt{d})$ and the primes of good reduction of $A/K$. In fact, we are able to prove a more general theorem concerning twists of $A/K$ by irreducible rational representations, assuming certain ramification conditions on $K$. Our methods are local relying on the interplay between torsion points and reduction properties of abelian varieties (see \cite{cx} and \cite{mentzeloskodairandtorsion} for more on this relationship).

Let $K$ be a field, let $L/K$ be a finite Galois extension with Galois group denoted by $G$, and let $A/K$ be an abelian variety. The group algebra $\mathbb{Q}[G]$ decomposes as a direct sum $$\mathbb{Q}[G] = \bigoplus_{\rho} \mathbb{Q}[G]_{\rho},$$ where the direct sum is indexed by the irreducible rational representations of $G$ and $\mathbb{Q}[G]_{\rho}$ is the $\rho$-isotypic component of $\mathbb{Q}[G]$. If $$I_{\rho}= \mathbb{Q}[G]_{\rho} \cap \mathbb{Z}[G],$$ then we can construct (see Definition \ref{definitiontwist} below) an abelian variety defined over $K$ denoted by $$A_{\rho} :=I_{\rho} \otimes_{\mathbb{Z}} A$$ and called the $\rho$-twist of $A/K$. Such twists were considered by Mazur and Rubin \cite{mazurrubin07}, \cite{mazurrubin10} in order to study Selmer ranks of elliptic curves. We will not consider Selmer ranks in this article, but we expect that our results will have applications to Selmer groups of quadratic twists of abelian varieties.

Our first result concerns the possible orders of $K$-rational torsion points of $A_{\rho}$, under certain assumptions on $K$ and $A/K$.

\begin{theorem}\label{theoremtwistgeneralform} Let $K$ be a local field of characteristic $0$ with valuation $v_K$ and residue field $k$ of characteristic $p >2$. Let $A/K$ be a simple $g$-dimensional abelian variety that has good reduction and let $L/K$ be a totally ramified finite Galois extension of degree $m \geq 2$ with Galois group $G$. Then
\begin{enumerate}
    \item If $\rho$ is a non-trivial irreducible rational representation of $G$ and $v_K(p)<\frac{p-1}{m}$, then the twist $A_{\rho}/K$ cannot have any $K$-rational points of order $p$.
    \item If $\rho$ is a non-trivial irreducible rational representation of $G$ and the twist $A_{\rho}/K$ has a $K$-rational point of prime order $\ell$ with $\ell \neq p$, then $\ell \leq 2g+1$.
\end{enumerate}
\end{theorem}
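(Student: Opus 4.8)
The proof is entirely local and rests on comparing the reduction of $A_{\rho}$ with that of $A$ through the action of $G$. I would first record the structure of the twist. Over $L$ the $\mathbb{Z}[G]$-module $I_{\rho}$ becomes a trivial Galois module, so $A_{\rho}\times_{K}L$ is isomorphic to a power of $A_{L}$; since $A$ has good reduction this shows that $A_{\rho}/K$ has potentially good reduction and already acquires good reduction over $L$. Next, because $A/K$ has good reduction, the fields $K(A[n])$ are unramified over $K$ (Néron--Ogg--Shafarevich), hence linearly disjoint from the totally ramified extension $L$; consequently inertia acts on the reduction of $A_{\rho}\times_{K}L$, which one may identify with $I_{\rho}\otimes\widetilde{A}(k)$, only through the $G$-action on $I_{\rho}$ (it acts trivially on $\widetilde{A}$). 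I would package this as a lemma: the reduction map $A_{\rho}(L)_{\mathrm{tors}}\to I_{\rho}\otimes\widetilde{A}(k)$ is $G$-equivariant for this action, and its kernel, the formal group over $\mathcal{O}_{L}$, is pro-$p$. Finally, since $\rho$ is non-trivial, $\mathbb{Q}[G]_{\rho}$ contains no copy of the trivial representation, so $I_{\rho}^{G}=0$, whence $(I_{\rho}/n)^{G}\cong H^{1}(G,I_{\rho})[n]$ for every $n$; in particular this group is annihilated by $m=|G|$.

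For part (1), suppose $A_{\rho}(K)$ contains a point $P$ of order $p$, so that $P\in A_{\rho}(L)$. I would invoke the classical fact that over a local field $F$ with $v_{F}(p)<p-1$ the formal group of any abelian variety with good reduction over $F$ is torsion-free (the formal logarithm converges on all of $\mathfrak{m}_{F}$), so reduction is injective on torsion. The hypothesis $v_{K}(p)<\tfrac{p-1}{m}$ is exactly $v_{L}(p)=m\,v_{K}(p)<p-1$; hence $P$ has nonzero reduction, which by the lemma is a $G$-fixed point of order $p$ in $I_{\rho}\otimes\widetilde{A}(k)$, i.e.\ a nonzero class in the $p$-primary part of $\bigl(I_{\rho}\otimes\widetilde{A}(k)\bigr)^{G}$. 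Decomposing $\widetilde{A}(k)$ into cyclic factors and using $I_{\rho}^{G}=0$, such $p$-torsion can exist only if $p\mid\#H^{1}(G,I_{\rho})$, hence only if $p\mid m$. But $v_{K}(p)\ge 1$ together with $v_{K}(p)<\tfrac{p-1}{m}$ forces $m<p-1$, so $p\nmid m$, a contradiction. The points requiring care are the precise torsion-freeness statement for formal groups when $v_{F}(p)<p-1$ and the (purely formal) bookkeeping of the $G$-invariants of $I_{\rho}\otimes\widetilde{A}(k)$ prime by prime.

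For part (2), I would run the same argument with a prime $\ell\neq p$. Now the kernel of reduction over $L$ is pro-$p$, so reduction is automatically injective on $\ell$-torsion, and a $K$-rational point of $A_{\rho}$ of order $\ell$ produces a nonzero $\ell$-torsion point in $I_{\rho}\otimes\widetilde{A}(k)$ fixed both by $G$ and by Frobenius; in particular $\widetilde{A}(k)[\ell]\neq 0$. Since $\rho$ is non-trivial, $I_{\rho}\otimes\mathbb{Q}$ has no $G$-fixed vectors, so the abelian part of the special fibre of the Néron model of $A_{\rho}$ vanishes; its identity component is unipotent, hence $p$-primary, and therefore the reduction of our point maps nontrivially to the component group of $A_{\rho}$, forcing $\ell\mid\#\Phi_{A_{\rho}}$. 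It then remains to bound the prime divisors of $\#\Phi_{A_{\rho}}$ by $2g+1$: here one uses that, because $A_{\rho}$ acquires good reduction over the totally ramified $L$ and its reduction is assembled from $\widetilde{A}$ and the $I_{\rho}$-twisting data, the component group and the relevant torsion are governed by the $g$-dimensional $\widetilde{A}$ rather than by $A_{\rho}$, and a Weil-type estimate then yields $\ell\le 2g+1$ (compare the local torsion bounds of Clark--Xarles, \cite{cx}). I expect this last step --- extracting the sharp $2\dim A+1$ rather than the crude $2\dim A_{\rho}+1$ one gets by treating $A_{\rho}$ as an arbitrary abelian variety with potentially good reduction --- to be the main obstacle, and to require a careful analysis of how inertia and Frobenius interact on $\widetilde{A}$ through the $I_{\rho}$-structure.
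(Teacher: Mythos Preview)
Your approach is genuinely different from the paper's and, for Part~(i), it works. The paper does not argue with the formal logarithm and $G$-cohomology of $I_\rho$ at all. Instead it isolates a single structural lemma: using the isogeny $\mathrm{Res}_{L/K}(A_L)\sim A\times\prod_{\rho\neq \mathbf{1}}A_\rho$ together with Milne's computation of the toric and abelian ranks of a Weil restriction along a totally ramified extension, one sees that \emph{every} nontrivial twist $A_\rho$ has purely additive reduction. Part~(i) then follows by invoking the author's earlier result that an abelian variety with purely additive reduction acquiring good reduction over a degree-$m$ extension cannot carry a $K$-rational $p$-torsion point when $v_K(p)<\tfrac{p-1}{m}$; Part~(ii) follows by citing Clark--Xarles directly. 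Your argument for Part~(i) unwinds roughly the same content but stays elementary: you trade the Weil-restriction machinery and the external theorem for a short computation with $(I_\rho/p)^G\cong H^1(G,I_\rho)[p]$ and the numerical observation $m<p-1$.

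For Part~(ii), however, there is a real gap. Your justification that the special fibre of $A_\rho$ over $K$ is unipotent is only heuristic: from $(I_\rho\otimes\mathbb{Q})^G=0$ you infer that the abelian part vanishes, but you neither explain why the map from the identity component of the N\'eron model over $K$ to that over $L$ detects the semi-abelian quotient, nor do you address the toric part. The paper's Weil-restriction lemma disposes of both ranks simultaneously via isogeny invariance, and you should adopt that step. More seriously, the final bound $\ell\le 2g+1$ is, as you yourself flag, not established in your outline: once you know $A_\rho$ has purely additive reduction, the paper simply invokes \cite[Main Theorem, Part~(iii)]{cx} and is done, whereas your proposed route through the interplay of Frobenius and inertia on $I_\rho\otimes\widetilde{A}$ is left as a promissory note. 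In short, your framework is sound and yields an attractive alternative proof of Part~(i), but for Part~(ii) you should replace the sketch by the purely-additive-reduction lemma (via Weil restriction) plus a direct appeal to Clark--Xarles.
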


\begin{remark}
Example \ref{example3torsion} below shows that the assumption that $v_K(p)<\frac{p-1}{m}$ in Part $(i)$ of Theorem \ref{theoremtwistgeneralform} is necessary and cannot be removed. The assumption that $A/K$ has good reduction in Theorem \ref{theoremtwistgeneralform} is also necessary and cannot be removed. The latter is, of course, expected because every elliptic curve over $\mathbb{Q}$ is a quadratic twist of another elliptic curve defined over $\mathbb{Q}$.
\end{remark} 

 As an application of Theorem \ref{theoremtwistgeneralform} we obtain the following theorem.

\begin{theorem}\label{theoremexample}
Let $A/\mathbb{Q}$ be a simple $g$-dimensional abelian variety of conductor $N$ and let $d$ be a square-free integer. If $d$ has a prime divisor $p>3$ with $p \nmid N$, then the quadratic twist $A^d/\mathbb{Q}$ cannot have any $\mathbb{Q}$-rational points of prime order $\ell$ with $\ell>3$.
\end{theorem}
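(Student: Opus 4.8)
The plan is to descend Theorem~\ref{theoremtwistgeneralform} from the given prime $p$ to a statement over $\mathbb{Q}$. Suppose for contradiction that $A^d(\mathbb{Q})$ contains a point $P$ of prime order $\ell>3$. Since $d$ is square-free and $p\mid d$ with $p$ odd, we have $v_p(d)=1$, so $d$ is not a square in $\mathbb{Q}_p$ and $L:=\mathbb{Q}_p(\sqrt d)/\mathbb{Q}_p$ is a totally ramified quadratic extension; put $G=\mathrm{Gal}(L/\mathbb{Q}_p)\cong\mathbb{Z}/2\mathbb{Z}$ and let $\rho$ be its nontrivial irreducible rational representation. Because $p\nmid N$, the base change $A_{\mathbb{Q}_p}$ has good reduction, and, as recorded in Example~\ref{examplequadratictwists}, the base change $(A^d)_{\mathbb{Q}_p}$ is canonically the $\rho$-twist $(A_{\mathbb{Q}_p})_{\rho}$. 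Hence $P$ becomes a $\mathbb{Q}_p$-rational point of order $\ell$ on $(A_{\mathbb{Q}_p})_{\rho}$, and we are exactly in the situation of Theorem~\ref{theoremtwistgeneralform} with $K=\mathbb{Q}_p$, residue characteristic $p>2$, and $m=2$.

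Now split into cases. If $\ell=p$, then since $p>3$ we have $v_K(p)=1<\tfrac{p-1}{2}=\tfrac{p-1}{m}$, so Part~(i) of Theorem~\ref{theoremtwistgeneralform} says $(A_{\mathbb{Q}_p})_{\rho}$ has no $\mathbb{Q}_p$-rational point of order $p$, a contradiction. (This is where the hypothesis $p>3$, and not merely $p$ odd, is essential: for $p=3$ and $m=2$ the inequality degenerates to an equality, and Example~\ref{example3torsion} shows the conclusion then genuinely fails.) If $\ell\neq p$, then Part~(ii) of Theorem~\ref{theoremtwistgeneralform} forces $\ell\le 2g+1$, which for $g=1$ already contradicts $\ell>3$. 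For arbitrary $g$ one can instead rule out every odd prime $\ell\neq p$ at once by a N\'eron--Ogg--Shafarevich argument: under the $\overline{\mathbb{Q}_p}$-isomorphism $A^d\cong A$ the point $P$ corresponds to $Q\in A[\ell]$ on which Galois acts via the quadratic character cutting out $L$; as $A$ has good reduction at $p$ and $\ell\neq p$ the inertia subgroup $I_p$ acts trivially on $A[\ell]$, while that character is ramified at $p$, so $2Q=0$ and $\ell=2$, again contradicting $\ell>3$.

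The one point that requires care is that Theorem~\ref{theoremtwistgeneralform} assumes the underlying abelian variety is simple, and $A/\mathbb{Q}$ being simple does not force $A_{\mathbb{Q}_p}$ to be simple. I would deal with this by decomposing $A_{\mathbb{Q}_p}$ up to isogeny into powers of simple abelian varieties $B/\mathbb{Q}_p$ --- each of which again has good reduction at $p$ and has $\dim B\le g$ --- and applying Theorem~\ref{theoremtwistgeneralform} to whichever simple factor still carries a point of order $\ell$ after twisting; the bound $\ell\le 2\dim B+1\le 2g+1$ is unharmed, and Part~(i) only needs good reduction. The mildly delicate bookkeeping is that an isogeny to a product of simple factors may have degree divisible by $\ell$, so one has to argue (e.g.\ by passing to a well-chosen factor, or directly with the N\'eron--Ogg--Shafarevich argument above, which sidesteps simplicity entirely) that a genuine point of order $\ell$ survives; apart from this, the proof is a routine unwinding of the definitions together with the ramification bookkeeping for $\mathbb{Q}_p(\sqrt d)/\mathbb{Q}_p$.
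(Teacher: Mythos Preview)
Your proof is correct and the overall strategy matches the paper's: split into $\ell=p$ and $\ell\neq p$, handle the first via Part~(i) of Theorem~\ref{theoremtwistgeneralform} (which the paper packages as Corollary~\ref{corollaryquadratictwist}), and handle the second via N\'eron--Ogg--Shafarevich. The paper skips your detour through Part~(ii)---which, as you already note, yields only $\ell\le 2g+1$ and is useless for $g>1$---and phrases the NOS step globally: from Corollary~\ref{corollarytwist} one gets $A_L(L)[\ell]\neq 0$, hence $L\subseteq\mathbb{Q}(A[\ell])$, and this division field is unramified at $p$ while $L$ is not. Your local inertia version of the same argument is equivalent and, as you observe, does not require simplicity at all. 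As for your worry that $A_{\mathbb{Q}_p}$ may fail to be simple: the paper's own Corollary~\ref{corollaryquadratictwist} applies Theorem~\ref{theoremtwistgeneralform} to the completion too, so it implicitly faces the same issue; in fact the proof of Part~(i) (Lemma~\ref{claimreduction} together with the cited result on torsion under purely additive reduction) never invokes simplicity, so the concern is harmless for the case $\ell=p$, and your NOS argument already disposes of $\ell\neq p$ without it.
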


\begin{remark}
    The above theorem partially generalizes \cite[Theorem 1.2, Part $(i)$]{mentzelosagasheconjecture} and can also be thought of as a higher dimensional analog of a proposition of Mazur and Gouv\^ea \cite[Proposition 1]{gouveamazur}. Moreover, as is shown in \cite[Remark 1.4]{mentzelosagasheconjecture} if $d = - 1$, then the conclusion of Theorem \ref{theoremexample} does not hold.
\end{remark}

We now explain some applications of Theorem \ref{theoremtwistgeneralform} to torsion growth of abelian varieties over quadratic extensions. Let $A/K$ be an abelian variety over a number field $K$ and let $L/K$ be a quadratic extension. If  $A(L)_{tors} \setminus A(K)_{tors} $ contains a point of prime order $p$, then it follows from the N\'eron-Ogg-Shafarevich Criterion \cite[Theorem 1]{st} that the primes of $K$ that ramify in $L$ are contained in the set
$$\{ \mathfrak{p} \text{ is a prime of } \mathcal{O}_K \; : \; \mathfrak{p} \text{ lies above } p  \; \text{or } A/K \text{ has bad reduction at } \mathfrak{p} \}.  $$ The following theorem tells us that, under a certain assumption on the field $K$, every prime which ramifies in $L$ and lies above $p$ must be a prime of bad reduction of $A/K$.

\begin{theorem}\label{theoremramification}
Let $A/K$ be a simple abelian variety over a number field $K$ and let $L$ be a quadratic extension of $K$. Assume that $A(L)_{tors} \setminus A(K)_{tors} $ contains a point of prime order $p$. If for all primes $\mathfrak{p}$ of $\mathcal{O}_K$ with associated valuation $v_{\mathfrak{p}}$ we have that $v_{\mathfrak{p}}(p) < \frac{p-1}{2}$, then the primes of $K$ that ramify in $L$ are contained in 
$$\{ \mathfrak{p} \text{ is a prime of } \mathcal{O}_K \; : A/K \text{ has bad reduction at } \mathfrak{p} \}.  $$

\end{theorem}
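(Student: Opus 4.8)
The plan is to reduce the statement to a single local computation and then invoke Theorem~\ref{theoremtwistgeneralform}(i). The only global input I would use, beyond that theorem, is exactly the Néron--Ogg--Shafarevich observation recalled just before the statement: a prime $\mathfrak{p}$ that ramifies in $L$ is either a prime of bad reduction of $A/K$ or lies above $p$. Consequently it is enough to fix a prime $\mathfrak{p}$ of $\mathcal{O}_K$ lying above $p$ at which $A$ has good reduction, to assume $\mathfrak{p}$ ramifies in $L$, and to reach a contradiction.

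I would first record two elementary reductions. The hypothesis ``$v_{\mathfrak{q}}(p)<\tfrac{p-1}{2}$ for every prime $\mathfrak{q}$'' forces $p$ to be odd --- indeed it forces $p\ge 5$ --- because any $\mathfrak{q}$ above $2$ has $v_{\mathfrak{q}}(2)\ge 1>\tfrac12$; only $p>2$ is actually needed below. Next, writing $L=K(\sqrt{d})$ and letting $\sigma$ generate $\mathrm{Gal}(L/K)$, I would convert the given point $P\in A(L)_{tors}\setminus A(K)_{tors}$ of order $p$ into a rational point on the quadratic twist. Since $P\notin A(K)$ we have $\sigma(P)\ne P$, so $R:=P-\sigma(P)$ is a nonzero element of $A(L)[p]$, hence of exact order $p$, and $\sigma(R)=-R$. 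Via the standard identification $A^{d}(K)\cong\{Q\in A(L)\,:\,\sigma(Q)=-Q\}$ afforded by the defining isomorphism of the twist (Example~\ref{examplequadratictwists}), $R$ corresponds to a point of $A^{d}(K)$ of order $p$.

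Then I would localize at $\mathfrak{p}$. The completion $K_{\mathfrak{p}}$ is a local field of characteristic $0$ with residue field $k$ of characteristic $p>2$ and normalized valuation satisfying $v_{K_{\mathfrak{p}}}(p)=v_{\mathfrak{p}}(p)<\tfrac{p-1}{2}$, and $A/K_{\mathfrak{p}}$ still has good reduction. Because $\mathfrak{p}$ ramifies in $L$, the extension $L_{\mathfrak{p}}=K_{\mathfrak{p}}(\sqrt{d})$ is a totally ramified Galois extension of $K_{\mathfrak{p}}$ of degree $m=2$ with group $G\cong\mathbb{Z}/2\mathbb{Z}$, and its unique nontrivial irreducible rational representation $\rho$ has $I_{\rho}=\mathbb{Z}\,(1-\sigma)$, so that the $\rho$-twist $A_{\rho}/K_{\mathfrak{p}}$ (Definition~\ref{definitiontwist}) is precisely the quadratic twist $A^{d}/K_{\mathfrak{p}}$. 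The point of order $p$ in $A^{d}(K)\subseteq A^{d}(K_{\mathfrak{p}})=A_{\rho}(K_{\mathfrak{p}})$ constructed above then contradicts Theorem~\ref{theoremtwistgeneralform}(i), since $v_{K_{\mathfrak{p}}}(p)<\tfrac{p-1}{m}$. Hence no prime ramifying in $L$ can be a prime of good reduction of $A/K$, which is the claim.

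The step I expect to require the most care is simply checking that the hypotheses of Theorem~\ref{theoremtwistgeneralform}(i) genuinely survive this reduction: that the global inequality both supplies the bound $v_{\mathfrak{p}}(p)<\tfrac{p-1}{2}$ at the relevant prime \emph{and} rules out residue characteristic $2$, and that the quadratic twist really is the $\rho$-twist for the sign character of $\mathbb{Z}/2\mathbb{Z}$. One further (minor) wrinkle is that Theorem~\ref{theoremtwistgeneralform} is phrased for a \emph{simple} abelian variety, while $A\times_{K}K_{\mathfrak{p}}$ need not be simple; this is harmless since simplicity plays no role in part~(i). If one prefers to avoid the point altogether, the argument can be given directly: $R$ reduces to a point $\tilde{R}\in\tilde{A}(k)$, where $k$ is the common residue field of $K_{\mathfrak{p}}$ and $L_{\mathfrak{p}}$; since $\mathrm{Gal}(L_{\mathfrak{p}}/K_{\mathfrak{p}})$ acts trivially on $k$, reduction carries $\sigma(R)=-R$ and $R$ to the same point, so $2\tilde{R}=0$ and hence $\tilde{R}=0$ as $p$ is odd; but the kernel of reduction is torsion-free because $e(L_{\mathfrak{p}}/\mathbb{Q}_{p})=2\,v_{\mathfrak{p}}(p)<p-1$, forcing $R=0$, a contradiction. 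This last line is essentially the proof of Theorem~\ref{theoremtwistgeneralform}(i) in the case $m=2$.
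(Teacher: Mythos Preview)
Your proof is correct and follows essentially the same route as the paper: produce a $K$-rational point of order $p$ on the quadratic twist $A^{d}$ from the new torsion in $A(L)$, then localize at a ramified prime of good reduction above $p$ and invoke Theorem~\ref{theoremtwistgeneralform}(i) to obtain a contradiction. The paper packages these two steps as Corollary~\ref{corollarytwist} and Corollary~\ref{corollaryquadratictwist}, respectively; your remark that simplicity of $A/K_{\mathfrak{p}}$ is neither guaranteed nor actually needed in part~(i) is a fair observation that the paper's corollaries also elide.
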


We note that Theorem \ref{theoremramification} is sharp in the sense that the condition $v_{\mathfrak{p}}(p) < \frac{p-1}{2}$ is optimal (see Example \ref{examplesharp} below).

Finally, we turn our attention to elliptic curves $E/\mathbb{Q}$ and quadratic extensions $L$ that satisfy the Heegner hypothesis for $E/\mathbb{Q}$, i.e., $L$ is an imaginary quadratic field and all prime divisors of the conductor $N$ of $E/\mathbb{Q}$ split in $L$. Under this assumption, we show that if $L \neq \mathbb{Q}(\sqrt{-3})$, then the quotient $|E_L(L)_{\mathrm{tors}}|/ |E(\mathbb{Q})_{\mathrm{tors}}|$ is a power of $2$ (see Corollary \ref{corollaryheegner} below).

\begin{acknowledgement}
The author was supported by Czech Science Foundation (GA\v CR) grant 21-00420M and by Charles University Research Center program No.UNCE/SCI/022. I would like to thank Dino Lorenzini for providing some useful comments and suggestions on an earlier version of this manuscript. I would also like to thank two anonymous referees for many valuable comments and suggestions.
\end{acknowledgement}

\section{Twists of abelian varieties}\label{sectiontwistsofabelianvarieties}

In this section, we prove Theorem \ref{theoremtwistgeneralform} and then derive some corollaries on torsion points of quadratic twists. Before we begin our proof, we recall some background material on twists of abelian varieties. We refer the reader to \cite{mazurrubinsilverberg}, \cite{silverberg08}, and \cite{milneonthearithmeticofabelianvarieties} for further information concerning this topic.

Let $K$ be any field, let $A/K$ be an abelian variety, and let $L/K$ be a finite Galois extension. If $X/K$ is any abelian variety, then we will denote by $X_L/L$ the base change of $X/K$ to $L$. An $L/K$-form of the abelian variety $A/K$ is a pair $(B, \psi)$, where $B/K$ is an abelian variety and $\psi:  A_L \longrightarrow B_L$ is an isomorphism which is defined over $L$. Two $L/K$-forms $(B,\psi)$ and $(B', \psi')$ are called equivalent if they are isomorphic over $K$.

If $(B, \psi)$ is an $L/K$-form of $A/K$, then the class of the map $\xi: \text{Gal}(L/K) \longrightarrow  \text{Aut}_{L}(A_L)$ given by $\xi(\sigma)= \psi^{-1} \circ \psi^{\sigma}$ for all $\sigma \in \text{Gal}(L/K)$ is an element of $ \text{H}^1(\text{Gal}(L/K), \text{Aut}_{L}(A_L))$, i.e., $\xi$ is a $1$-cocycle for $\text{Gal}(L/K)$ with values in $\text{Aut}_{L}(A_L)$. This association induces a bijection between the set of $L/K$-forms modulo equivalence and the pointed set $ \text{H}^1(\text{Gal}(L/K), \text{Aut}_{L}(A_L))$ (see \cite[Chapter III]{serregaloiscohomology}).

 To ease notation in what follows we shall write $\text{Aut}_{L}(A^n)$ for $\text{Aut}_{L}((A_L)^n)$ for every positive integer $n$.

\begin{definition}\label{definitiontwist}
Let $A/K$ be an abelian variety and let $R$ be a commutative ring with a ring homomorphism $R \longrightarrow \text{End}_K(A)$. Fix a positive integer $n$. Let $I$ be a finitely generated free $R$-module of rank $n$ with a continuous right action of the absolute Galois group of $K$ and fix an $R$-module isomorphism $\psi : R^n \longrightarrow I$. If GL$_n(R)$ is regarded as a subgroup of $\text{Aut}_{L}(A^n)$, then the homorphism $\xi: G \longrightarrow \text{Aut}_{L}(A^n)$ given by  $\xi(\sigma)=\psi^{-1} \circ \psi^{\sigma}$ is a $1$-cocycle for $\text{Gal}(L/K)$, i.e., the class of $\xi$ belongs to $\text{H}^1(\text{Gal}(L/K), \text{Aut}_{L}(A^n))$. Therefore, there exists an abelian variety $(I \otimes_{R} A)/K$ corresponding to $\xi$. We call the abelian variety $(I \otimes_{R} A)/K$ the twist of $A/K$ by $I$. 
\end{definition}

Note that according to Definition \ref{definitiontwist} the variety $(I \otimes_{R} A)/K$ is an $L/K$-form of $A^n/K$. Moreover, it is shown in \cite[Remark 1.2]{mazurrubinsilverberg} that the definition of the twist is independent of the choice of $\psi$. We proceed with two important examples that will be used below.

\begin{example}(Weil restriction)
Let $L/K$ be a finite extension and let $B/L$ be an abelian variety. Recall that the Weil restriction Res$_{L/K}(B)$ of $B/L$ from $L$ to $K$, is the scheme defined over $K$ representing the functor from $K$-schemes to sets given by $S \mapsto B(S \times_{K} L)$.

Let $L/K$ be a finite Galois extension with $G=\text{Gal}(L/K)$ and let $I=R[G]$. Then the twist $I \otimes_R B$ is isomorphic to $\text{Res}_{L/K}(B)$ (see \cite[Proposition 4.1]{mazurrubinsilverberg}).

\end{example}

\begin{example}\label{examplequadratictwists}(Quadratic twists, see also \cite[Section 3.1.3]{silverberg08})
Let $A/K$ be an abelian variety over a field $K$ and let $L=K(\sqrt{d})$ be a quadratic extension. Let $\chi_L : G_K \longrightarrow \{ \pm 1 \}$ be the unique nontrivial quadratic character of the absolute Galois group $G_K$ of $K$ that factors through $\text{Gal}(L/K)$. Let $R=\mathbb{Z}$ considered as a subset of $\text{End}_K(A)$ via the identification of $R$ with $\{ [m] : \; m \in \mathbb{Z} \}$, where $[m]$ in the multiplication by $m$ on $A/K$. Consider $I$ to be a free rank $1$ module over $\mathbb{Z}$ equipped by an action of $G_K$ given by $i^{\sigma}=\chi_L(\sigma) \cdot i$ for $i \in I$ and $\sigma \in G_K$. Denote by $A^d/K$ the abelian variety $(I \otimes_R A)/K$. Here the map $\psi$ is defined as follows; fix a generator $i_0$ of $I$ and let $\psi: R \longrightarrow I$ be the isomorphism given by  $\psi(m)=m \cdot i_0$. The associated $1$-cocycle is $\xi: \text{Gal}(L/K) \longrightarrow  \text{Aut}_{L}(A)$ given by $\xi(\gamma)=[-1]$ and $\xi(\text{id})=[1]=\text{id}$, where $\gamma$ is the generator of $\text{Gal}(L/K)$. The abelian variety $A^d/K$ is called the quadratic twist of $A/K$ by $d$. 

In the special case where $E/K$ is an elliptic curve and char$(K) \neq 2,3$  using \cite[Example X.2.4]{aec} we see that $E^{d}/K$ is the usual quadratic twist of $E/K$ by $d$, i.e., if $E/K$ is given by a short Weierstrass equation $$y^2=x^3+ax+b,$$ for some some $a,b \in K$, then $E^d/K$ is given by a short Weierstrass equation $$dy^2=x^3+ax+b.$$
\end{example}

Consider now a finite Galois extension $L/K$ with Galois group $G:=\text{Gal}(L/K)$. The group algebra $\mathbb{Q}[G]$ decomposes as a direct sum $\mathbb{Q}[G] = \bigoplus_{\rho} \mathbb{Q}[G]_{\rho}$, where the direct sum is indexed by the irreducible rational representations of $G$ and $\mathbb{Q}[G]_{\rho}$ is the $\rho$-isotypic component of $\mathbb{Q}[G]$.

\begin{definition}
     Let $A/K$ be an abelian variety. If $\rho$ is an irreducible rational representation of the group $G$, then let $$I_{\rho}= \mathbb{Q}[G]_{\rho} \cap \mathbb{Z}[G].$$ 
     Define the $\rho$-twist of $A/K$ by $$A_{\rho} :=I_{\rho} \otimes_{\mathbb{Z}} A.$$
\end{definition}  

The following theorem, due to Mazur, Rubin, and Silverberg, will be very useful in the proof of Theorem \ref{theoremtwistgeneralform}.

\begin{theorem}(see \cite[Theorem 4.5]{mazurrubinsilverberg})\label{weilrestrictionandtwists}
Let $L/K$ be a finite Galois extension with Galois group $G$ and let $A/K$ be an abelian variety. Then $\text{Res}_{L/K}(A_L)$ is isogenous over $K$ to $\prod_{\rho} A_{\rho}$, where the product is taken over all irreducible rational representations of $G$.
\end{theorem}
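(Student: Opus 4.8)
The plan is to reduce the theorem to the Weil restriction Example together with the semisimplicity of $\mathbb{Q}[G]$ and two formal properties of the twisting operation of Definition \ref{definitiontwist}. By that Example (\cite[Proposition 4.1]{mazurrubinsilverberg}), applied with $R=\mathbb{Z}$, there is a $K$-isomorphism $\text{Res}_{L/K}(A_L)\cong\mathbb{Z}[G]\otimes_{\mathbb{Z}}A$, where $G$ acts on $\mathbb{Z}[G]$ by the regular representation; so it suffices to produce a $K$-isogeny between $\mathbb{Z}[G]\otimes_{\mathbb{Z}}A$ and $\prod_{\rho}A_{\rho}=\prod_{\rho}(I_{\rho}\otimes_{\mathbb{Z}}A)$. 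Writing \emph{$G$-lattice} for a finitely generated free $\mathbb{Z}$-module with a continuous $G$-action (Definition \ref{definitiontwist} with $R=\mathbb{Z}$), I would first record two properties of the functor $M\mapsto M\otimes_{\mathbb{Z}}A$ on $G$-lattices, both contained in \cite{mazurrubinsilverberg} and immediate from the cocycle description together with functoriality of Galois descent: (a) it is functorial for $G$-equivariant homomorphisms, so that a $G$-map $\phi\colon M\to N$ induces a $K$-morphism $\phi_{*}\colon M\otimes_{\mathbb{Z}}A\to N\otimes_{\mathbb{Z}}A$ with $(\psi\circ\phi)_{*}=\psi_{*}\circ\phi_{*}$ and $[d]_{*}=[d]$ for $d\in\mathbb{Z}$; and (b) it sends finite direct sums to products, i.e.\ if $M=M_{1}\oplus\cdots\oplus M_{s}$ as $G$-modules, then $M\otimes_{\mathbb{Z}}A\cong\prod_{i}(M_{i}\otimes_{\mathbb{Z}}A)$ over $K$, since for compatible $\mathbb{Z}$-bases the defining cocycle of $M$ is block diagonal with blocks the cocycles of the $M_{i}$ and descent commutes with the corresponding product decomposition of $A^{\operatorname{rk} M}$.

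For the arithmetic input, since $\mathbb{Q}[G]$ is semisimple the isotypic decomposition $\mathbb{Q}[G]=\bigoplus_{\rho}\mathbb{Q}[G]_{\rho}$ is its Artin--Wedderburn decomposition into two-sided ideals; in particular each $\mathbb{Q}[G]_{\rho}$, hence each $I_{\rho}=\mathbb{Q}[G]_{\rho}\cap\mathbb{Z}[G]$, is stable under the regular action of $G$ (taken on either side, as two-sided ideals are), so $I_{\rho}$ is a $G$-sublattice of $\mathbb{Z}[G]$ and the inclusion $j\colon\bigoplus_{\rho}I_{\rho}\hookrightarrow\mathbb{Z}[G]$ is $G$-equivariant. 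The map $j\otimes_{\mathbb{Z}}\mathbb{Q}$ is the equality $\bigoplus_{\rho}\mathbb{Q}[G]_{\rho}=\mathbb{Q}[G]$, an isomorphism, so $j$ is injective with finite cokernel; pick $d\in\mathbb{Z}_{>0}$ annihilating the cokernel. Then $d\cdot\mathbb{Z}[G]\subseteq j(\bigoplus_{\rho}I_{\rho})$, so $j'\colon\mathbb{Z}[G]\to\bigoplus_{\rho}I_{\rho}$ defined by $j'(n):=j^{-1}(dn)$ is a well-defined $G$-equivariant map satisfying $j\circ j'=[d]$ and $j'\circ j=[d]$. By (a), $j_{*}$ and $j'_{*}$ are $K$-morphisms with $j_{*}\circ j'_{*}=[d]$ and $j'_{*}\circ j_{*}=[d]$ on the relevant abelian varieties; as multiplication by $d$ on an abelian variety is an isogeny (surjective with finite kernel, in every characteristic), $j_{*}$ is surjective and $\ker j_{*}\subseteq\ker[d]$ is finite, so $j_{*}$ is a $K$-isogeny. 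Combining with (b) gives $\bigoplus_{\rho}I_{\rho}\otimes_{\mathbb{Z}}A\cong\prod_{\rho}A_{\rho}$ over $K$, and composing with $j_{*}$ and with the isomorphism $\mathbb{Z}[G]\otimes_{\mathbb{Z}}A\cong\text{Res}_{L/K}(A_L)$ yields the asserted $K$-isogeny $\prod_{\rho}A_{\rho}\sim\text{Res}_{L/K}(A_L)$.

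The main obstacle is property (a): the functoriality of the twist in the module variable together with the degree bookkeeping that a $G$-equivariant rational isomorphism of $G$-lattices induces an isogeny. Making this precise means unwinding Definition \ref{definitiontwist} to check that a $\mathbb{Z}[G]$-linear map intertwines the two descent $1$-cocycles, hence that the associated morphism over $L$ descends to $K$, and then that ``injective with finite cokernel'' on lattices becomes ``isogeny'' on twists via the auxiliary map $j'$ and the fact that $[d]$ is an isogeny. By contrast, the regular-representation description of $\text{Res}_{L/K}(A_L)$, the block-diagonal shape of the cocycle of a direct sum, and the two-sided-ideal structure of the rational isotypic components are either quoted from \cite{mazurrubinsilverberg} or formal consequences of the semisimplicity of $\mathbb{Q}[G]$.
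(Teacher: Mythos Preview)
The paper does not supply its own proof of this statement: it is quoted verbatim as \cite[Theorem 4.5]{mazurrubinsilverberg} and used as a black box in the proof of Lemma~\ref{claimreduction}. Your argument is correct and is essentially the proof given in \cite{mazurrubinsilverberg}: identify $\text{Res}_{L/K}(A_L)$ with $\mathbb{Z}[G]\otimes_{\mathbb{Z}}A$, use that $\bigoplus_{\rho}I_{\rho}\hookrightarrow\mathbb{Z}[G]$ becomes an isomorphism after tensoring with $\mathbb{Q}$ (hence has finite cokernel), and appeal to functoriality and additivity of $I\mapsto I\otimes_{\mathbb{Z}}A$ to turn this finite-index inclusion of $G$-lattices into a $K$-isogeny.
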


Before we proceed to the proof of Theorem \ref{theoremtwistgeneralform}, we need to briefly recall a few basic facts concerning reduction of abelian varieties. The interested reader can find more information on this topic in \cite{neronmodelsbook} and \cite{neronmodelslorenzini}. Let $K$ be a local field, i.e., $K$ is a complete field with respect to a discrete valuation $v_K$ and has finite residue field $k$, and let $A/K$ be an abelian variety of dimension $g$. We denote by $\mathcal{A}/\mathcal{O}_K$ the N\'eron model of $A/K$. The special fiber $\mathcal{A}_k/k$ of $\mathcal{A}/\mathcal{O}_K$ is a smooth commutative group scheme. We denote by $\mathcal{A}^0_k/k$ the connected component of the identity of $\mathcal{A}_k/k$. By a theorem of Chevalley (see \cite[Theorem $1.1$]{con}) there exists a short exact sequence $$0\longrightarrow T \times U \longrightarrow \mathcal{A}^0_k \longrightarrow B \longrightarrow 0, $$
where $T/k$ is a torus, $U/k$ is a unipotent group, and $B/k$ is an abelian variety. The number $\text{dim}(U)$ (resp. $\text{dim}(T)$, $\text{dim}(B)$) is called the unipotent (resp. toric, abelian) rank of $A/K$. By construction we have the following equality $g=\text{dim}(U)+\text{dim}(T)+\text{dim}(B)$. We say that $A/K$ has purely additive reduction if $g=\text{dim}(U)$, or equivalently, if $\text{dim}(T)=\text{dim}(B)=0$.

\begin{proof}[Proof of Theorem \ref{theoremtwistgeneralform}]

To ease notation let $\rho_0,..., \rho_n $ be the irreducible rational representations of $G$ with $\rho_0$ the trivial representation (corresponding to the trivial twist of $A/K$) and let $A_i:=A_{\rho_i}$ for $i=1,...,n$. 

\begin{lemma}\label{claimreduction}
For every $i=1,...,n$ the abelian variety $A_i$ has purely additive reduction.
\end{lemma}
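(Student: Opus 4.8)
The goal is to show each $A_i$ ($i \geq 1$) has purely additive reduction over $K$. The key structural input is Theorem~\ref{weilrestrictionandtwists}: $\mathrm{Res}_{L/K}(A_L)$ is isogenous over $K$ to $\prod_{i=0}^n A_i$. Since isogenous abelian varieties have the same toric, unipotent, and abelian ranks (the identity components of the Néron model special fibers are isogenous as algebraic groups, so the Chevalley decompositions match up in dimension), it suffices to understand the reduction of $\mathrm{Res}_{L/K}(A_L)$ together with that of $A_0 \cong A$ itself, and then compare. Since $\rho_0$ is trivial, $A_0 = A$ has good reduction by hypothesis, so its abelian rank is $g$ and its toric and unipotent ranks are $0$.

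**Main computation.** The plan is to compute the abelian, toric, and unipotent ranks of $\mathrm{Res}_{L/K}(A_L)$ directly. Here $L/K$ is totally ramified of degree $m$, and $A_L$ (base change of a good-reduction abelian variety to a ramified extension) still has good reduction over $L$. There is a standard description of the Néron model of a Weil restriction along a ramified extension: it is \emph{not} simply the Weil restriction of the Néron model, and $\mathrm{Res}_{L/K}(A_L)$ acquires additive reduction. More precisely, I would invoke (or reprove) the fact that for $B/L$ with good reduction and $L/K$ totally ramified of degree $m \geq 2$, the abelian rank of $\mathrm{Res}_{L/K}(B)$ equals $\dim B = g$ (it drops from the naive $mg$), the toric rank is $0$, and the unipotent rank is $(m-1)g$; in particular the special fiber of the Néron model has identity component an extension of an abelian variety of dimension $g$ by a unipotent group of dimension $(m-1)g$. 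This is the content of results of Milne and of Edixhoven–Liu–Lorenzini on Néron models of Weil restrictions; the total ramification is what forces the unipotent part and kills the torus.

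**Conclusion from the rank bookkeeping.** Given this, $\prod_{i=0}^n A_i$ has abelian rank $g$, toric rank $0$, unipotent rank $(m-1)g$. Since $A_0 = A$ already contributes abelian rank $g$ and unipotent rank $0$, additivity of ranks over products forces $\prod_{i=1}^n A_i$ to have abelian rank $0$, toric rank $0$, and unipotent rank $(m-1)g = \sum_{i=1}^n \dim A_i$ (using $\sum_{i=0}^n \dim A_i = mg$ from $\dim \mathrm{Res}_{L/K}(A_L) = mg$ and $\dim A_0 = g$). Since abelian rank, toric rank, and unipotent rank are each nonnegative for every $A_i$ and sum to zero in the first two cases, each $A_i$ with $i \geq 1$ has abelian rank $0$ and toric rank $0$, hence $\dim U_i = \dim A_i$, i.e., purely additive reduction. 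This finishes the lemma.

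**Expected main obstacle.** The technical heart is the claim about the Néron model of $\mathrm{Res}_{L/K}(A_L)$ over a totally ramified extension — specifically that the abelian rank is exactly $g$ (not larger) and the toric rank is $0$. One must either cite the precise statement from the literature on Néron models of Weil restrictions (Edixhoven–Liu–Lorenzini, or Bosch–Lütkebohmert–Raynaud, or Milne's book on arithmetic of abelian varieties) or give a short argument: e.g., $\mathrm{Res}_{L/K}(A_L)(K) = A(L)$, and one can analyze the formal group / the behavior of torsion, using that the inertia acts through $G = \mathrm{Gal}(L/K)$ which is entirely ``ramified'' so only the $G$-invariants survive in the semi-abelian reduction part. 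A clean way is: $\mathrm{Res}_{L/K}(A_L)$ has potentially good reduction (it becomes $A_L^m$ after base change to $L$, which has good reduction), so its toric rank is $0$ automatically; and its abelian rank equals the dimension of the largest abelian subvariety of $A_L^m$ fixed by the twisted $G$-action, which is the diagonal copy of $A$, of dimension $g$. That potential-good-reduction observation is the slickest route and I would lead with it to dispatch the toric rank, then handle the abelian rank via the $G$-fixed part of the good reduction.
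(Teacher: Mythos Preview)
Your proposal is correct and follows essentially the same route as the paper: both use the isogeny $\mathrm{Res}_{L/K}(A_L) \sim A \times \prod_{i\ge 1} A_i$ from Theorem~\ref{weilrestrictionandtwists}, the computation of the abelian and toric ranks of the Weil restriction along a totally ramified extension (the paper cites exactly the reference you anticipate, namely Milne, \emph{On the arithmetic of abelian varieties}, Remark on p.~179), invariance of these ranks under isogeny (the paper cites \cite[Corollaire IX.2.2.7]{sga7}), and then additive bookkeeping over the product to conclude. Your explicit unipotent-rank count $(m-1)g$ and the alternative potential-good-reduction argument for killing the toric rank are correct extras not present in the paper's terser version.
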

\begin{proof}[Proof Lemma \ref{claimreduction}]
Let $W:=$Res$_{L/K}(A_L)$ be the Weil restriction of the base change $A_L/L$ of $A/K$ to $L$. Theorem \ref{weilrestrictionandtwists} tells us that there exists an isogeny $W \longrightarrow A \times A_1 \times ... \times A_{n}$ defined over $K$. Since $L/K$ is totally ramified and $A/K$ has good reduction, using \cite[Remark on Page 179]{milneonthearithmeticofabelianvarieties} we obtain that the abelian rank of $W/K$ is equal to the dimension of $A/K$ and that the toric rank of $W/K$ is zero. Recall that we assume that $A/K$ has good reduction. This implies that the abelian rank of $A/K$ is equal to the dimension of $A/K$. Denote by $\mathcal{A}/\mathcal{O}_K$ the N\'eron model of $A/K$ and by $\mathcal{A}_i/\mathcal{O}_K$ the N\'eron model of $A_i/K$, for every $i=1,...,n$. Since the N\'eron model of $(A \times A_1 \times ... \times A_n )/ K$ is $(\mathcal{A} \times \mathcal{A}_1 \times ... \times \mathcal{A}_{n}) / \mathcal{O}_K$ and the abelian, unipotent, and toric ranks of abelian varieties are preserved under isogeny (see \cite[Corollaire IX.2.2.7]{sga7}), we find that $A_i/K$ has purely additive reduction for every $i=1,...,n$. This proves our lemma.
\end{proof}

 {\it Proof of Part $(i)$:}
Assume that for some $j>0$ the twist $A_j/K$ has a $K$-rational point of order $p$ and we will arrive at a contradiction. Since $A_j/K$ acquires good reduction in $L$ (because it is an $L/K$-form of $A/K$) and $v_K(p)<\frac{p-1}{m}$, using \cite[Theorem 1.1]{mentzeloskodairandtorsion}, applied to the base extension of $A_j/K$ to the maximal unramified extension of $K$, we find that $A_j/K$ cannot have purely additive reduction. However, this contradicts Lemma \ref{claimreduction}. This proves part $(i)$.

{\it Proof of Part $(ii)$:}
Assume that for some $j>0$ the twist $A_j/K$ has a $K$-rational point of order $\ell$ with $\ell \neq p$. We need to show that $\ell \leq 2g+1$, where $g$ is the dimension of $A/K$. Using Lemma \ref{claimreduction} we find that $A_j/K$ has purely additive reduction. Therefore, using \cite[Main Theorem Part (iii)]{cx} we find that $\ell \leq 2g+1$. This completes the proof of Theorem \ref{theoremtwistgeneralform}.

\end{proof}

\begin{emptyremark}\label{remarkquadratictwist}
If $L/K$ is a finite abelian Galois extension, then the irreducible rational representations of $\text{Gal}(L/K)$ are in one-to-one correspondence with the cyclic sub-extensions of $K$ in $L$. Assume now that $L/K$ is a quadratic extension with Galois group $G$ and denote by $\sigma$ the generator of $G$. According to \cite[Section 5.3]{silverberg08}, if $\rho_L$ is the irreducible rational representation corresponding to the extension $L/K$, then $\mathbb{Z}[G]_{\rho_L}=(\sigma-1)\mathbb{Z}$ is a free rank one $\mathbb{Z}$-module. Moreover, if $\chi_L : G_K \longrightarrow \{ \pm 1 \}$ is the unique nontrivial quadratic character of the absolute Galois group $G_K$ of $K$ that factors through $\text{Gal}(L/K)$, then every $\gamma \in G_K$ acts on $\mathbb{Z}[G]_{\rho_L}$ as multiplication by $\chi_L(\gamma)$. Therefore, it follows that the corresponding abelian variety $A_{\rho_L}/K$ is just the quadratic twist $A^d/K$ of Example \ref{examplequadratictwists}, for some $d\in L$ with $L=K(\sqrt{d})$.
\end{emptyremark}

\begin{example}\label{exampletwist} (See also the \texttt{MathOverflow} post \cite{mathoverflowpost}) In this example we show that it is possible for both an elliptic curve and a quadratic twist of it to have rational points of odd orders. Let $E/\mathbb{Q}$ be the elliptic curve given by the following Weierstrass equation $$y^2+xy+y=x^3-76x+298.$$
The curve $E/\mathbb{Q}$ has LMFDB \cite{lmfdb} label 50a2 and $E(\mathbb{Q}) \cong \mathbb{Z}/3\mathbb{Z}$. Therefore, we see that $E(\mathbb{Q}_5)$ contains a point of order $3$. Using SAGE \cite{sagemath} we find that the quadratic twist of $E/\mathbb{Q}$ by $5$, which we denote by $E^5/\mathbb{Q}$, has Weierstrass equation $$y^2+xy+y=x^3+x^2-3x+1$$
and LMFDB \cite{lmfdb} label 50b3. Moreover, we have that $E^5(\mathbb{Q}) \cong \mathbb{Z}/5\mathbb{Z}$. Therefore, we see that $E^5(\mathbb{Q}_5)$ contains a point of order $5$. Thus, both the curve $E/\mathbb{Q}_5$ and the curve $E^5/\mathbb{Q}_5$ have (nontrivial) $\mathbb{Q}_5$-rational points of finite order.
\end{example}

\begin{example}\label{example3torsion}
This example shows that the condition $v_K(p)<\frac{p-1}{m}$ in Theorem \ref{theoremtwistgeneralform} is necessary, with $p=3$ and $m=2$. Consider the elliptic curve $E/\mathbb{Q}$ given by the following Weierstrass equation $$y^2+y=x^3+x^2-9x-15.$$ The curve $E/\mathbb{Q}$ has LMFDB \cite{lmfdb} label 19a2. Therefore, the base change $E_{\mathbb{Q}_3}/\mathbb{Q}_3$ has good reduction. Using SAGE \cite{sagemath} we find that the quadratic twist of $E/\mathbb{Q}$ by $-3$, which we denote by $E^{-3}/\mathbb{Q}$, has Weierstrass equation $$y^2+y=x^3-84x+315$$
and LMFDB \cite{lmfdb} label 171b2. Moreover, we have that $E^{-3}(\mathbb{Q}) \cong \mathbb{Z} \times \mathbb{Z}/3\mathbb{Z}$. This implies that the curve $E^{-3}_{\mathbb{Q}_3}/\mathbb{Q}_3$ has a $\mathbb{Q}_3$-rational point of order $3$.
\end{example}

\begin{corollary}\label{corollaryquadratictwist}
Let $K$ be a number field, let $A/K$ be a simple abelian variety, and let $L=K(\sqrt{d})$ be a quadratic extension of $K$, for some $d \in K$. Fix a rational prime $p$. Let $\mathfrak{P}$ be a prime of $K$ which lies above $p$ and denote by $v_{\mathfrak{P}}$ the corresponding valuation of $K$. Assume that $\mathfrak{P}$ ramifies in $L$ and that $A/K$ has good reduction modulo $\mathfrak{P}$. If $v_{\mathfrak{P}}(p) < \frac{p-1}{2}$, then the quadratic twist $A^d/K$ cannot have a $K$-rational point of order $p$.
\end{corollary}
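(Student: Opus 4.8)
The plan is to reduce the global statement to the local situation of Theorem \ref{theoremtwistgeneralform} by completing $K$ at the prime $\mathfrak{P}$. First I would set $K_{\mathfrak{P}}$ to be the completion of $K$ at $\mathfrak{P}$; since $\mathfrak{P}$ lies above $p$ and $p \neq 2$ (indeed $v_{\mathfrak{P}}(p) < \frac{p-1}{2}$ forces $p > 2$, as otherwise the right-hand side is $\tfrac12$), the residue field of $K_{\mathfrak{P}}$ has characteristic $p > 2$, so the hypotheses on the residue characteristic in Theorem \ref{theoremtwistgeneralform} are met. Because $A/K$ has good reduction modulo $\mathfrak{P}$, the base change $A_{K_{\mathfrak{P}}}/K_{\mathfrak{P}}$ has good reduction.

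The extension $L/K$ is quadratic and $\mathfrak{P}$ ramifies in $L$, so there is a unique prime $\mathfrak{Q}$ of $L$ above $\mathfrak{P}$, and the local extension $L_{\mathfrak{Q}}/K_{\mathfrak{P}}$ is a totally ramified quadratic (hence Galois) extension, of degree $m = 2 \geq 2$. Write $G = \mathrm{Gal}(L_{\mathfrak{Q}}/K_{\mathfrak{P}}) \cong \mathbb{Z}/2\mathbb{Z}$. By Remark \ref{remarkquadratictwist}, the nontrivial irreducible rational representation $\rho_L$ of $G$ gives a $\rho_L$-twist of $A_{K_{\mathfrak{P}}}$ which is precisely the base change $(A^d)_{K_{\mathfrak{P}}}$ of the global quadratic twist $A^d/K$ to $K_{\mathfrak{P}}$ (the local quadratic twist of $A_{K_{\mathfrak{P}}}$ by the image of $d$). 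Here I should note that $A_{K_{\mathfrak{P}}}$ is still simple — or, more carefully, I only need the conclusion of Theorem \ref{theoremtwistgeneralform} to apply; simplicity of $A/K$ over the global field suffices for part $(i)$ since that part of the argument (via Lemma \ref{claimreduction} and \cite[Theorem 1.1]{mentzeloskodairandtorsion}) really only uses that $A_{K_{\mathfrak{P}}}$ has good reduction, and purely additive reduction of the twist contradicts that. Actually, to be safe I would invoke Theorem \ref{theoremtwistgeneralform}(i) as a black box, checking that all its hypotheses hold for $A_{K_{\mathfrak{P}}}$ and the extension $L_{\mathfrak{Q}}/K_{\mathfrak{P}}$.

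Now apply Theorem \ref{theoremtwistgeneralform}, Part $(i)$: with $v_{K_{\mathfrak{P}}}(p) = v_{\mathfrak{P}}(p) < \frac{p-1}{2} = \frac{p-1}{m}$, the twist $(A^d)_{K_{\mathfrak{P}}}/K_{\mathfrak{P}}$ has no $K_{\mathfrak{P}}$-rational point of order $p$. Finally, if $A^d/K$ had a $K$-rational point $P$ of order $p$, then its image in $A^d(K_{\mathfrak{P}})$ under the inclusion $K \hookrightarrow K_{\mathfrak{P}}$ would be a $K_{\mathfrak{P}}$-rational point; since the reduction of $A^d$ at $\mathfrak{P}$ — equivalently the behavior of torsion — the injectivity of $A^d(K) \hookrightarrow A^d(K_{\mathfrak{P}})$ shows this image still has order $p$, a contradiction. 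The only genuine subtlety is the identification in the previous paragraph — that the base change to $K_{\mathfrak{P}}$ of the global quadratic twist coincides with the $\rho_L$-twist of $A_{K_{\mathfrak{P}}}$ associated to the local quadratic extension $L_{\mathfrak{Q}}/K_{\mathfrak{P}}$. This is a compatibility-of-twisting-with-base-change statement: the cocycle defining $A^d$ factors through $\mathrm{Gal}(L/K)$ and restricts, under $\mathrm{Gal}(L_{\mathfrak{Q}}/K_{\mathfrak{P}}) \hookrightarrow \mathrm{Gal}(L/K)$, to the cocycle defining the local twist, because $\mathfrak{P}$ ramifies so the decomposition group is all of $\mathrm{Gal}(L_{\mathfrak{Q}}/K_{\mathfrak{P}})$ and the restriction of $\chi_L$ is the local quadratic character. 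I expect this identification to be the main thing to write out carefully; everything else is a direct citation of Theorem \ref{theoremtwistgeneralform} and the elementary fact that the reduction-theoretic hypotheses transfer to the completion.
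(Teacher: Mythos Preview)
Your proof is correct and follows essentially the same route as the paper: complete at $\mathfrak{P}$, identify the local quadratic twist with the $\rho$-twist via Paragraph~\ref{remarkquadratictwist}, and apply Theorem~\ref{theoremtwistgeneralform}(i). Your additional care in noting that $p>2$ is forced by the inequality, and that simplicity of $A_{K_{\mathfrak{P}}}$ is not actually used in the proof of Part~(i), is well taken---the paper glosses over both points.
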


\begin{proof}
Since $L/K$ is a quadratic extension and we assume that $\mathfrak{P}$ ramifies in $L$, we know that there exists a unique prime $\mathfrak{P}'$ of $L$ which lies above $\mathfrak{P}$. Let $K_{\mathfrak{P}}$ be the completion of $K$ at $\mathfrak{P}$ and let $L_{\mathfrak{P}'}$ be the completion of $L$ at $\mathfrak{P}'$. It follows from basic algebraic number theory (see \cite[Chapter II]{neukirchbook}) that under our assumptions, the extension $L_{\mathfrak{P}'}/K_{\mathfrak{P}}$ is a (totally) ramified quadratic extension of local fields. Consider the base change $A_{K_{\mathfrak{P}}}/K_{\mathfrak{P}}$ of $A/K$ to $K_{\mathfrak{P}}$. Using Paragraph \ref{remarkquadratictwist} we see that Part $(i)$ of Theorem \ref{theoremtwistgeneralform} implies that the abelian variety $A^d_{K_{\mathfrak{P}}}/K_{\mathfrak{P}}$ cannot have a $K_{\mathfrak{P}}$-rational point of order $p$. Therefore, we find that $A^d/K$ cannot have a $K$-rational point of order $p$. This proves our corollary.
\end{proof}

As an application of the previous corollary, we will now prove Theorem \ref{theoremexample} of the introduction. Before we proceed to our proof we need to recall the following useful lemma.

\begin{lemma}
Let $K$ be a field with char$(K) \neq 2$ and let $A/K$ be a simple abelian variety. Let $L/K$ be an abelian Galois extension of degree $m>1$ and assume that the exponent of the Galois group Gal$(L/K)$ is $2$. Then there exist $d_i \in K$ for $i=1,...,m$ with associated quadratic twists $A_i:=A^{d_i}$ and a group homomorphism $$\displaystyle\bigoplus_{i=1}^m A_i(K) \longrightarrow A_L(L)$$ whose kernel and cokernel are annihilated by $m$.
\end{lemma}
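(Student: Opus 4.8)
The plan is to obtain the homomorphism as the map on $K$-points induced by the $K$-isogeny of Theorem \ref{weilrestrictionandtwists}, but to make that isogeny explicit enough that it comes equipped with a two-sided inverse up to multiplication by $m$.

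Set $G := \mathrm{Gal}(L/K)$. Since $G$ is abelian of exponent $2$ and order $m$, every irreducible complex representation of $G$ is one-dimensional with values in $\{\pm1\}$, hence is already defined over $\mathbb{Q}$; so $G$ has exactly $m$ irreducible rational representations $\rho_1,\dots,\rho_m$, attached to the characters $\chi_i\colon G\to\{\pm1\}$, and $\mathbb{Q}[G]_{\rho_i}=\mathbb{Q}e_{\rho_i}$ for the central idempotents $e_{\rho_i}=\tfrac1m\sum_{g\in G}\chi_i(g)\,g$. One checks that $I_{\rho_i}=\mathbb{Q}[G]_{\rho_i}\cap\mathbb{Z}[G]$ is free of rank $1$ over $\mathbb{Z}$, generated by $m e_{\rho_i}\in\mathbb{Z}[G]$, with $G$ acting through $\chi_i$; arguing as in Paragraph \ref{remarkquadratictwist} (with the trivial $\rho$ giving the norm ideal, so that $A_{\rho}\cong A$), there are $d_1,\dots,d_m\in K$ --- with $K(\sqrt{d_i})$ the fixed field of $\ker\chi_i$ and $d_1=1$ --- for which $A_{\rho_i}\cong A^{d_i}=:A_i$.

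Next I would write down the two lattice maps $\phi\colon\bigoplus_{i=1}^m I_{\rho_i}\hookrightarrow\mathbb{Z}[G]$ (the inclusion, which makes sense because $\sum_i e_{\rho_i}=1$) and $\psi\colon\mathbb{Z}[G]\to\bigoplus_{i=1}^m I_{\rho_i}$, $x\mapsto(m e_{\rho_i}x)_i$. A direct computation with the orthogonal idempotents $e_{\rho_i}$, using that $e_{\rho_i}g=\chi_i(g)e_{\rho_i}$, shows that $\phi$ and $\psi$ are $G$-equivariant and that $\phi\circ\psi=[m]$ on $\mathbb{Z}[G]$ while $\psi\circ\phi=[m]$ on $\bigoplus_i I_{\rho_i}$. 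Applying the twisting construction $-\otimes_{\mathbb{Z}}A$ of Definition \ref{definitiontwist}, which is additive and functorial (see \cite[Section 4]{mazurrubinsilverberg}), together with the identifications $\mathbb{Z}[G]\otimes_{\mathbb{Z}}A=\mathrm{Res}_{L/K}(A_L)$ and $\bigl(\bigoplus_i I_{\rho_i}\bigr)\otimes_{\mathbb{Z}}A=\prod_{i=1}^m A_i$, this produces $K$-morphisms $\Phi\colon\prod_i A_i\to\mathrm{Res}_{L/K}(A_L)$ and $\Psi\colon\mathrm{Res}_{L/K}(A_L)\to\prod_i A_i$ with $\Phi\circ\Psi=[m]$ and $\Psi\circ\Phi=[m]$.

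Finally, I would pass to $K$-points: $\mathrm{Res}_{L/K}(A_L)(K)=A_L(L)$ by the defining property of the Weil restriction, and $\bigl(\prod_i A_i\bigr)(K)=\bigoplus_i A_i(K)$, so $\Phi$ furnishes the desired homomorphism $\bigoplus_{i=1}^m A_i(K)\to A_L(L)$. The relation $\Psi\circ\Phi=[m]$ forces its kernel into $\bigl(\bigoplus_i A_i(K)\bigr)[m]$, and $\Phi\circ\Psi=[m]$ forces $m\cdot A_L(L)$ into its image, so that both the kernel and the cokernel are annihilated by $m$. I expect the only genuine work to be the idempotent bookkeeping giving $\phi\psi=\psi\phi=[m]$ together with the $G$-equivariance of $\phi$ and $\psi$; everything else is formal once functoriality and additivity of the twist are invoked.
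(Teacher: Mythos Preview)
Your argument is correct. The idempotent computation is straightforward: with $G$ elementary abelian of exponent~$2$ the primitive idempotents are $e_{\rho_i}=\tfrac1m\sum_g\chi_i(g)g$, and since $\chi_i(g)=\pm1$ one sees at once that $I_{\rho_i}=\mathbb{Z}\cdot m e_{\rho_i}$, that $\phi\psi=\psi\phi=[m]$, and that both maps are $G$-equivariant because $e_{\rho_i}g=\chi_i(g)e_{\rho_i}$. Functoriality and additivity of $-\otimes_{\mathbb{Z}}A$ from \cite{mazurrubinsilverberg} then give the pair of $K$-isogenies you describe, and passing to $K$-points yields the lemma.

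The paper itself does not prove this lemma at all: it simply observes that the statement is the special case $e=2$ of \cite[Lemma~1.1]{laskalorenz}. Your route is genuinely different in presentation, though not in spirit: Laska--Lorenz also work with the group-ring idempotents and the decomposition of $\mathbb{Q}[G]$, but they argue directly with the twisted Galois actions on points rather than through the Mazur--Rubin--Silverberg formalism and the Weil restriction. What your approach buys is that it stays entirely inside the machinery already set up in Section~\ref{sectiontwistsofabelianvarieties} (Definition~\ref{definitiontwist}, Theorem~\ref{weilrestrictionandtwists}, Paragraph~\ref{remarkquadratictwist}), so the lemma becomes a self-contained corollary of that framework instead of an external citation. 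The cost is that you have to take on faith, or verify, the functoriality of $I\mapsto I\otimes_{\mathbb{Z}}A$ for $G$-equivariant lattice maps; this is indeed in \cite{mazurrubinsilverberg}, so there is no gap. Note also that neither argument uses the simplicity hypothesis on $A$; it is harmless but superfluous here.
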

\begin{proof}
Using \cite[Page 165]{laskalorenz} we find that this is a special case of \cite[Lemma 1.1]{laskalorenz} with $e=2$.
\end{proof}

When $m=2$, which is the case of primary interest for us, we immediately obtain the following corollary.

\begin{corollary}\label{corollarytwist}
Let $L/K$ be a quadratic extension of number fields with $L=K(\sqrt{d})$ and let $A/K$ be a simple abelian variety. If $n$ is odd, then $$A(K)[n] \oplus A^{d}(K)[n] \cong A_L(L)[n].$$
\end{corollary}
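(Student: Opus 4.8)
The plan is to specialize the preceding lemma to $m=2$ and then restrict to $n$-torsion. Since $L/K$ is quadratic, $\mathrm{Gal}(L/K)\cong\mathbb{Z}/2\mathbb{Z}$ has exponent $2$, so the lemma (with $m=2$) furnishes elements $d_1,d_2\in K$, the associated quadratic twists $A^{d_1},A^{d_2}$, and a group homomorphism
\[
f:\ A^{d_1}(K)\oplus A^{d_2}(K)\longrightarrow A_L(L)
\]
whose kernel and cokernel are annihilated by $2$. The first thing I would do is identify the two twists: in the construction of \cite{laskalorenz} the index set is the set of characters of $\mathrm{Gal}(L/K)$ (equivalently, the cyclic subextensions of $K$ in $L$), and a group of order $2$ has exactly two characters --- the trivial one, whose twist is $A^{1}\cong A$, and the nontrivial one, whose twist is the $A^{d}$ attached to $L=K(\sqrt d)$ as in Paragraph~\ref{remarkquadratictwist}. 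Hence, up to the ordering of the summands, $f$ is a homomorphism $A(K)\oplus A^{d}(K)\to A_L(L)$ with kernel and cokernel killed by $2$.

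Next I would invoke the following elementary fact: \emph{if $f:M\to N$ is a homomorphism of abelian groups whose kernel and cokernel are both annihilated by $2$, and $n$ is odd, then the induced map $M[n]\to N[n]$ is an isomorphism.} To prove it, factor $f$ as $M\twoheadrightarrow I\hookrightarrow N$ with $I=\operatorname{im}f$, so that $\ker f$ and $N/I$ are annihilated by $2$; since $n$ is odd, multiplication by $n$ acts as the identity on any $\mathbb{Z}/2\mathbb{Z}$-module, so $(\ker f)[n]=0=(\ker f)/n(\ker f)$ and $(N/I)[n]=0=(N/I)/n(N/I)$. Applying the snake lemma to multiplication by $n$ on $0\to\ker f\to M\to I\to 0$ and on $0\to I\to N\to N/I\to 0$ then yields $M[n]\xrightarrow{\ \sim\ }I[n]\xrightarrow{\ \sim\ }N[n]$, the composite being the restriction of $f$. (Alternatively one argues directly from a Bézout relation $1=2a+nb$.)

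Combining the two paragraphs with $n$ odd as in the hypothesis gives
\[
A(K)[n]\oplus A^{d}(K)[n]\ =\ \bigl(A(K)\oplus A^{d}(K)\bigr)[n]\ \xrightarrow{\ \sim\ }\ A_L(L)[n],
\]
which is the asserted isomorphism. The only genuinely delicate point is the bookkeeping in the first step --- verifying that the pair of twists produced by the lemma really is $\{A,A^{d}\}$ rather than some other pair; the restriction-to-$n$-torsion step is routine abelian group theory that uses nothing beyond $\gcd(2,n)=1$.
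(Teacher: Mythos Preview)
Your proposal is correct and follows exactly the route the paper intends: the paper simply states that the corollary is obtained ``immediately'' from the preceding lemma by specializing to $m=2$, and your write-up spells out precisely those details (identifying the two twists as $A$ and $A^{d}$, and passing to $n$-torsion via the elementary $\gcd(2,n)=1$ argument). Your caution about verifying that the pair of twists produced by the lemma is $\{A,A^{d}\}$ is well placed, since the lemma as stated only asserts existence of some $d_i$.
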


We are now ready to proceed with our proof.

\begin{proof}[Proof of Theorem \ref{theoremexample}] 
    Assume that the twist $A^d/\mathbb{Q}$ has a $\mathbb{Q}$-rational point of prime order $\ell$ for some $\ell > 3$ and we will find a contradiction. Recall that we assume the existence of a prime $p>3$ that divides $d$ but it does not divide $N$.

    Assume first that $\ell \neq p$. Since $p$ and $N$ are coprime, we see that $A/\mathbb{Q}$ has good reduction modulo $p$. Fix an algebraic closure $\overline{\mathbb{Q}}$ of $\mathbb{Q}$. Let $K_{\ell}=\mathbb{Q}(A[\ell])$ be the $\ell$-division field of $A/K$, i.e., the minimal field of definition of all the $\ell$-torsion points of $A(\overline{\mathbb{Q}})$. If $L=\mathbb{Q}(\sqrt{d})$, then it follows from Corollary \ref{corollarytwist} that $$A(\mathbb{Q})[\ell] \oplus A^{d}(\mathbb{Q})[\ell] \cong A_L(L)[\ell].$$ Since we assume that $A^d/\mathbb{Q}$ has a $\mathbb{Q}$-rational point of order $\ell$, we find that $A_L/L$ has an $L$-rational point of order $\ell$. Therefore, we have that $L \subseteq K_{\ell}$. On the other hand, since $A/\mathbb{Q}$ has good reduction modulo $p$, using the N\'eron-Ogg-Shafarevich Criterion \cite[Theorem 1]{st}, we find that $p$ is unramified in $K_{\ell}$. However, this contradicts the fact that $L \subseteq K_{\ell}$ because $p$ divides $d$ and, hence, it ramifies in $L$.

    Assume now that every prime divisor of $d$ divides $\ell$, i.e., that $d = \pm \ell$. Since $\ell \geq 5$, we see that if $v_{\ell}$ is the valuation corresponding to $\ell$, then $v_{\ell}(\ell) =1 < \frac{\ell-1}{2}$. Therefore, applying Corollary \ref{corollaryquadratictwist} (for $K=\mathbb{Q}$ and $\ell=\mathfrak{P}$) we find that $A^d/\mathbb{Q}$ cannot have a $\mathbb{Q}$-rational point of order $\ell$, which is again a contradiction. This proves our theorem.
\end{proof}

\begin{remark}
Let $R$ be a complete discrete valuation ring with fraction field $K$ of characteristic $p>0$ and algebraically closed residue field. One may wonder whether there can exist an analog of Theorem \ref{theoremtwistgeneralform} over the field $K$. Unfortunately, such an analog does not seem to exist, as we now explain. Let $E/K$ be an elliptic curve with good reduction and let $\mathcal{E}/R$ be the Néron Model of $E/K$. Assume that the Hasse invariant of $\mathcal{E}/R$ has vanishing order $\frac{p-1}{2}$ (see \cite[Section 5]{liedtkeschroer} for information on the Hasse invariant of $\mathcal{E}/R$). Let $E^{(p)}/K$ be the Frobenius pullback of the curve $E/K$, which still has good reduction by \cite[Proposition 7.2]{liedtkeschroer}. According to \cite[Proposition 8.3]{liedtkeschroer} for every separable quadratic extension $K'/K$ the corresponding quadratic twist $\widetilde{E^{(p)}}/K$ of $E^{(p)}/K$ has a $K$-rational point of order $p$.
\end{remark}

\section{Torsion growth of abelian varieties and an application to a conjecture of Gross and Zagier}\label{sectiontorsiongrowthgenralcase}

In this section, we consider torsion growth questions for abelian varieties over quadratic fields. After proving Theorem \ref{theoremramification}, we turn to elliptic curves $E/\mathbb{Q}$ and we study the possible torsion growth over quadratic extensions satisfying the Heegner hypothesis with respect to $E/\mathbb{Q}$.

\begin{proof}[Proof of Theorem \ref{theoremramification}]
    Let $\mathfrak{p}$ be a prime of $\mathcal{O}_K$ which ramifies in $L$ and lies above $p$. Assume that $A/K$ has good reduction modulo $\mathfrak{p}$ and we will find a contradiction. Write $L=K(\sqrt{d})$ for some $d \in K$ and denote by $v_{\mathfrak{p}}$ the valuation associated to $\mathfrak{p}$. Since we assume that $v_{\mathfrak{p}}(p) < \frac{p-1}{2}$, using Corollary \ref{corollaryquadratictwist} we find that the twist $A^d/K$ cannot have a $K$-rational point of order $p$.  However, this is a contradiction because by Corollary \ref{corollarytwist} we have that $$A(K)[p] \oplus A^{d}(K)[p] \cong A_L(L)[p]$$ and we assume that $A(L)_{tors} \setminus A(K)_{tors} $ contains a point of order $p$. This proves our theorem.
\end{proof}

\begin{example}\label{examplesharp}
  This example shows that Theorem \ref{theoremramification} is sharp for $p:=7$. Consider the elliptic curve $E/\mathbb{Q}$ with LMFDB \cite{lmfdb} label 26.b1. This elliptic curve is given by the following Weierstrass equation $$y^2+xy+y=x^3-x^2-213x-1257.$$ Let $\mathbb{Q}(\zeta_7)^+$ be the maximal totally real subfield of the cyclotomic field $\mathbb{Q}(\zeta_7)$. Using the LMFDB database we find that $E_{\mathbb{Q}(\zeta_7)^+}(\mathbb{Q}(\zeta_7)^+)_{tors}$ is trivial and that $E_{\mathbb{Q}(\zeta_7)}(\mathbb{Q}(\zeta_7))_{tors} \cong \mathbb{Z}/7\mathbb{Z}$. Thus the curve $E_{\mathbb{Q}(\zeta_7)^+}/\mathbb{Q}(\zeta_7)^+$ acquires a torsion point of order $7$ over the quadratic extension $\mathbb{Q}(\zeta_7)/\mathbb{Q}(\zeta_7)^+$. Moreover, if $\mathfrak{p}^+$ is the prime of $\mathbb{Q}(\zeta_7)^+$ that lies above $7$, then the curve $E_{\mathbb{Q}(\zeta_7)^+}/\mathbb{Q}(\zeta_7)^+$ has good reduction modulo $\mathfrak{p}^+$. Finally, if $v_{\mathfrak{p}^+}$ is the associated valuation, we see that $v_{\mathfrak{p}^+}(7)=\frac{7-1}{2}$.
\end{example}

\begin{example}
    The following is an interesting example where an elliptic curve acquires both a torsion point of large order and everywhere good reduction over a quadratic extension. Consider the elliptic curve $E/\mathbb{Q}$ with LMFDB \cite{lmfdb} label 1225.b2. This elliptic curve is given by the following Weierstrass equation $$y^2+xy+y=x^3+x^2-8x+6.$$  Let $\mathbb{Q}(\zeta_{35})^+$ be the maximal totally real subfield of the cyclotomic field $\mathbb{Q}(\zeta_{35})$. Recall that the degree of the extension $\mathbb{Q}(\zeta_{35})^+/\mathbb{Q}$ is $12$. The field $\mathbb{Q}(\zeta_{35})^+$ contains a sextic number field $K$ with defining polynomial $$x^6-x^5-7x^4+2x^3+7x^2-2x-1.$$ Using SAGE \cite{sagemath} (or MAGMA) it is easy to see that the torsion subgroup $E_K(K)_{tors}$ is trivial and that $E_K/K$ has bad reduction modulo two prime ideals. In fact, $E_K/K$ is a twist of an elliptic curve that appears in \cite[Section 7.2]{lorenzinitorsionandexceptionalunits}. On the other hand, using LMFDB we find that the curve $E_{\mathbb{Q}(\zeta_{35})^+}/\mathbb{Q}(\zeta_{35})^+$ has a torsion point of order $37$ and everywhere good reduction. 
\end{example}

    
    For an elliptic curve $E/\mathbb{Q}$ and quadratic field $L$ the relationship between $E(\mathbb{Q})_{\text{tors}}$ and $E_L(L)_{\text{tors}}$ has been studied by Gonz\'alez-Jim\'enez and Tornero in \cite{gonzalezjimeneztornero} and \cite{gonzalezjimeneztorneroII}. Moreover, Najman, answering a problem posed by Gonz\'alez-Jim\'enez and Tornero, in \cite{najmantwists} gave sharp upper bounds on the number of quadratic extensions for which $E(\mathbb{Q})_{\text{tors}} \subsetneq E_L(L)_{\text{tors}}$. In the following theorem, we show that if every prime of bad reduction for $E/\mathbb{Q}$ is unramified in $L$, then the possible torsion growth is very restricted.

\begin{theorem}\label{theoremtorsiongrowth}
Let $E/\mathbb{Q}$ be an elliptic curve, let $L/\mathbb{Q}$ be a quadratic extension, and assume that every prime of bad reduction of $E/\mathbb{Q}$ is unramified in $L$. Then

\begin{enumerate}
     \item The set $E_L(L)_{\mathrm{tors}}\setminus E(\mathbb{Q})_{\mathrm{tors}}$ cannot contain points of prime order $p>3$.
     \item If there exists a prime $p \neq 3$ which ramifies in $L$, then the quotient $|E_L(L)_{\mathrm{tors}}|/ |E(\mathbb{Q})_{\mathrm{tors}}|$ is equal to a power of $2$.
\end{enumerate}

\end{theorem}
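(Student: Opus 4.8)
The plan is to treat the two parts separately, in both cases transferring the question to the quadratic twist $E^{d}/\mathbb{Q}$, where $L=\mathbb{Q}(\sqrt{d})$.

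For Part $(i)$ I would argue by contradiction. If $E_{L}(L)_{\mathrm{tors}}\setminus E(\mathbb{Q})_{\mathrm{tors}}$ contained a point of prime order $p>3$, then for every rational prime $q$ we would have $v_{q}(p)\le 1<\frac{p-1}{2}$, so the hypothesis of Theorem~\ref{theoremramification} is satisfied by the simple abelian variety $E/\mathbb{Q}$; hence every prime of $\mathbb{Q}$ ramifying in $L$ would be a prime of bad reduction of $E/\mathbb{Q}$. But $L/\mathbb{Q}$ is a nontrivial extension, so some prime of $\mathbb{Q}$ ramifies in $L$ (no nontrivial extension of $\mathbb{Q}$ is everywhere unramified), which contradicts the assumption that all primes of bad reduction are unramified in $L$.

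For Part $(ii)$ the first step is bookkeeping with Corollary~\ref{corollarytwist}: applying it with $n=\ell^{k}$ for $k$ large and each odd prime $\ell$ gives $E_{L}(L)[\ell^{\infty}]\cong E(\mathbb{Q})[\ell^{\infty}]\oplus E^{d}(\mathbb{Q})[\ell^{\infty}]$, whence $\frac{|E_{L}(L)_{\mathrm{tors}}|}{|E(\mathbb{Q})_{\mathrm{tors}}|}=\frac{|E_{L}(L)[2^{\infty}]|}{|E(\mathbb{Q})[2^{\infty}]|}\cdot\bigl(\text{odd part of }|E^{d}(\mathbb{Q})_{\mathrm{tors}}|\bigr)$. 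The first factor is a power of $2$ since $E(\mathbb{Q})[2^{\infty}]\subseteq E_{L}(L)[2^{\infty}]$, so it suffices to show that $E^{d}(\mathbb{Q})$ has no point of odd prime order $\ell$. If $\ell>3$ and $E^{d}(\mathbb{Q})[\ell]\ne 0$, then Corollary~\ref{corollarytwist} forces $E(\mathbb{Q})[\ell]\subsetneq E_{L}(L)[\ell]$, which yields a point of order exactly $\ell$ in $E_{L}(L)_{\mathrm{tors}}\setminus E(\mathbb{Q})_{\mathrm{tors}}$, contradicting Part $(i)$.

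The remaining case $\ell=3$ is where the hypothesis that some prime $p\ne 3$ ramifies in $L$ is used, and I expect it to be the main obstacle: Corollary~\ref{corollaryquadratictwist} cannot be invoked to rule out a rational point of order $3$, since that would require the prime $3$ itself to ramify in $L$ and to satisfy $v_{3}(3)<1$, which is impossible (consistent with Example~\ref{example3torsion}). Instead, since $p$ ramifies in $L$ it is not a prime of bad reduction, so $E/\mathbb{Q}$ has good reduction at $p$; as $3\ne p$, the N\'eron-Ogg-Shafarevich criterion shows that the inertia group $I_{p}$ acts trivially on $E[3]$. Because $E^{d}$ is the quadratic twist of $E$ by the character $\chi_{L}$ (Example~\ref{examplequadratictwists}), there is an isomorphism of Galois modules $E^{d}[3]\cong E[3]\otimes\chi_{L}$, and $\chi_{L}|_{I_{p}}$ is the nontrivial quadratic character of $I_{p}$ because $p$ ramifies in $L$; hence some $\sigma\in I_{p}$ acts on $E^{d}[3]\cong(\mathbb{Z}/3\mathbb{Z})^{2}$ as $-\mathrm{id}$, so $E^{d}[3]^{I_{p}}=0$. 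Therefore $E^{d}(\mathbb{Q})[3]\subseteq E^{d}(\mathbb{Q}_{p}^{\mathrm{nr}})[3]=E^{d}[3]^{I_{p}}=0$, which finishes Part $(ii)$. A pleasant feature of this last step is that it works uniformly for all $p\ne 3$, including $p=2$, precisely because $3$ is prime to the residue characteristic.
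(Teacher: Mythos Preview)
Your proof is correct. Part~$(i)$ is exactly the paper's argument, spelling out why Theorem~\ref{theoremramification} applies. In Part~$(ii)$ both you and the paper reduce, via Corollary~\ref{corollarytwist}, to showing $E^{d}(\mathbb{Q})[3]=0$; the difference lies in how this last fact is established.

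The paper argues as follows: since $p\mid d$ and $E$ has good reduction at $p$, Comalada's tables \cite[Proposition~1 and Table~II]{com} force $E^{d}/\mathbb{Q}$ to have Kodaira type $\mathrm{I}_{0}^{*}$, $\mathrm{II}$, or $\mathrm{I}_{8}^{*}$ at $p$, and then \cite[Proposition~2.4]{mentzelosagasheconjecture} shows that a rational elliptic curve with a point of order~$3$ cannot have any of these types at a prime $p\neq 3$. Your route is a direct Galois computation: good reduction of $E$ at $p$ and $p\neq 3$ give, by N\'eron--Ogg--Shafarevich, trivial inertia action on $E[3]$; twisting by $\chi_{L}$, which is nontrivial on $I_{p}$ because $p$ ramifies in $L$, makes inertia act by $-\mathrm{id}$ on $E^{d}[3]\cong E[3]\otimes\chi_{L}$, so there are no $I_{p}$-invariants and in particular no $\mathbb{Q}$-rational points. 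Your argument is more self-contained, avoiding the classification of Kodaira types under twist and the appeal to an external proposition, and as you note it handles $p=2$ transparently. The paper's approach, on the other hand, extracts finer reduction-type information about $E^{d}$ at $p$ that may be of independent interest.
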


\begin{proof}
 
 {\it Proof of Part $(i)$:}  This part follows directly from Theorem \ref{theoremramification}.
 
 {\it Proof of Part $(ii)$:} Using the previous part we see that we only need to show that $E_L(L)_{\mathrm{tors}} \setminus E(\mathbb{Q})_{\mathrm{tors}}$ cannot contain a point of order $3$. Assume that $E_L(L)_{\mathrm{tors}} \setminus E(\mathbb{Q})_{\mathrm{tors}}$ contains a point $P$ of order $3$ and we will find a contradiction.  Write $L=\mathbb{Q}(\sqrt{d})$ with $d \in \mathbb{Z}$ square-free. By Corollary \ref{corollarytwist} we have that $E_L(L)[3] \cong  E(\mathbb{Q})[3] \oplus E^{d}(\mathbb{Q})[3]$, which implies that there exists a point $P' \in E^{d}(\mathbb{Q})[3]$ of order $3$. Since $p$ is a prime of good reduction for $E/\mathbb{Q}$ and $p$ divides $d$, we find that $E^d/\mathbb{Q}$ has reduction of Kodaira type I$_0^*$, II, or I$_8^*$ modulo $p$ by \cite[Proposition 1]{com} and \cite[Table II]{com} (see also
the hypotheses for this table at the bottom of page 58 of \cite{com}). However, since $p\neq 3$, using \cite[Proposition 2.4]{mentzelosagasheconjecture}, we find that rational elliptic curves with a point of order $3$ cannot have reduction of Kodaira type I$_0^*$, II, or I$_8^*$ modulo $p$. This implies that $E^d/\mathbb{Q}$ cannot have a $\mathbb{Q}$-rational point of order $3$, which is a contradiction. This proves our theorem.

\end{proof}

\begin{example}
    This example shows that the assumption that $p\neq 3$ in Part $(ii)$ of Theorem \ref{theoremtorsiongrowth} is necessary, and cannot be removed. Consider the elliptic curve $E/\mathbb{Q}$ given by the following Weierstrass equation 
    $$y^2+xy+y=x^3+549x+2202.$$
    This is the curve with LMFDB \cite{lmfdb} label 50a4 (Cremona \cite{cremonabook} label 50a4). It has bad reduction modulo $2$ and modulo $5$, and good reduction modulo every other prime. Using the LMFDB database we find that on the one hand $E(\mathbb{Q})_{\mathrm{tors}}$ is trivial while on the other hand $E_{\mathbb{Q}(\sqrt{-3})}(\mathbb{Q}(\sqrt{-3}))_{\mathrm{tors}} \cong \mathbb{Z}/3\mathbb{Z}$.
\end{example}

Let $E/\mathbb{Q}$ be an elliptic curve of conductor $N$. Work of Breuil, Conrad, Diamond, Taylor, and Wiles (see \cite{bcdt}, \cite{taylorwiles}, and \cite{wiles}) tells us that there exists a modular parametrization $\phi : X_0(N) \rightarrow E$ defined over $\mathbb{Q}$, where $X_0(N)/\mathbb{Q}$ is the modular curve associated to $\Gamma_0(N)$. Let $L/\mathbb{Q}$ be an imaginary quadratic field satisfying the Heegner hypothesis for $E/\mathbb{Q}$, i.e., all primes that divide the conductor $N$ of $E/\mathbb{Q}$ split in $L$. Let $\mathcal{O}_L$ be the ring of integers of $L$. The Heegner hypothesis implies that there exists an integral ideal $\mathcal{N}$ of $\mathcal{O}_L$ such that $\mathcal{O}_L/\mathcal{N} \cong \mathbb{Z}/N\mathbb{Z}$. Let $x_1 \in X_0(N)(\mathbb{C})$ be the point corresponding to the isogeny $\mathbb{C}/\mathcal{O}_L \longrightarrow \mathbb{C}/\mathcal{N}^{-1}$ (whose kernel is $\mathcal{N}^{-1}/\mathcal{O}_K \cong  \mathcal{O}_L/\mathcal{N} \cong \mathbb{Z}/N\mathbb{Z}$). The theory of complex multiplication tells us that $x_1 \in X_0(N)(H)$, where $H$ is the Hilbert class field of $L$. Finally, let $P_L=\text{Trace}_{H/L} ( \phi (x_1) )$, which is called the Heegner point associated with $L$ and is well defined up to sign and torsion.

Let $m$ be the Manin constant of $E/\mathbb{Q}$, let $c(E/\mathbb{Q})$ be the product of the Tamagawa numbers of $E/\mathbb{Q}$, and let $2u_L$ be the number of roots of unity in $L$. The following conjecture is due to Gross and Zagier.
 
 \begin{conjecture}\label{fullgrosszagierconjecture}(\cite[Conjecture (2.2) in Section V]{grosszagierpaper})
 If $P_L$ has infinite order in $E_L(L)$, then then $P_L$ generates a subgroup of finite index in $E_L(L)$ and this index equals $c(E/\mathbb{Q}) \cdot m \cdot u_L \cdot \sqrt{|\Sha(E_L/L)|}$.
 \end{conjecture}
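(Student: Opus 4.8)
The plan is to deduce the index formula from the Gross--Zagier height formula combined with the full Birch--Swinnerton-Dyer conjecture for $E_L/L$, using the Heegner hypothesis to compare Tamagawa numbers over $L$ and over $\mathbb{Q}$ and using the torsion results of the previous sections (in particular Corollary \ref{corollaryheegner}) to control the torsion bookkeeping.

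First I would pass to the analytic rank one setting. If $P_L$ has infinite order then $\hat h_{E_L/L}(P_L)>0$, so the Gross--Zagier height formula forces $L'(E_L/L,1)\neq 0$; since the Heegner hypothesis makes the sign in the functional equation of $L(E_L/L,s)=L(E/\mathbb{Q},s)\,L(E^{d_L}/\mathbb{Q},s)$ equal to $-1$, Kolyvagin's Euler system argument then yields that the order of vanishing at $s=1$ is exactly one, that $E_L(L)$ has rank one, and that $\Sha(E_L/L)$ is finite. Next I would unwind the index. Write $E_L(L)\cong\mathbb{Z}\oplus E_L(L)_{\mathrm{tors}}$, fix a generator $Q$ of the free part, and write $P_L=nQ+t$ with $t\in E_L(L)_{\mathrm{tors}}$ and $n\geq 1$; an elementary computation then gives $[E_L(L):\langle P_L\rangle]=n\cdot|E_L(L)_{\mathrm{tors}}|$ and $\hat h(P_L)=n^2\,\mathrm{Reg}(E_L/L)$, so that $n=\big(\hat h(P_L)/\mathrm{Reg}(E_L/L)\big)^{1/2}$. (Both quantities are insensitive to the sign-and-torsion ambiguity in the definition of $P_L$, so this makes sense.)

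The heart of the argument is a comparison of two expressions for $L'(E_L/L,1)$. On one hand, the Gross--Zagier formula writes $\hat h(P_L)$ as an explicit constant times $L'(E_L/L,1)$, the constant involving the Manin constant $m$, the unit index $u_L$, $\sqrt{|d_L|}$, and the Petersson norm of the newform attached to $E/\mathbb{Q}$; via the modular-degree--period identity the Petersson norm may be replaced by the complex period $\Omega_{E_L/L}$ entering Birch--Swinnerton-Dyer. On the other hand, the rank one Birch--Swinnerton-Dyer formula for $E_L/L$ writes $L'(E_L/L,1)$ as $\Omega_{E_L/L}\,\mathrm{Reg}(E_L/L)\,|\Sha(E_L/L)|\,c(E_L/L)/\big(|E_L(L)_{\mathrm{tors}}|^2\sqrt{|d_L|}\big)$. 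Dividing the first expression by $\mathrm{Reg}(E_L/L)$ and substituting the second, the regulator, the period, and $\sqrt{|d_L|}$ all cancel and one obtains $n^2=m^2u_L^2\,c(E_L/L)\,|\Sha(E_L/L)|/|E_L(L)_{\mathrm{tors}}|^2$, hence $[E_L(L):\langle P_L\rangle]=n\,|E_L(L)_{\mathrm{tors}}|=m\,u_L\sqrt{c(E_L/L)\,|\Sha(E_L/L)|}$. Finally I would identify $\sqrt{c(E_L/L)}$ with $c(E/\mathbb{Q})$: under the Heegner hypothesis every prime of bad reduction of $E/\mathbb{Q}$ splits in $L$ and so contributes its Tamagawa number twice to $c(E_L/L)$, whereas every prime ramifying in $L$ is coprime to $N$, hence a prime of good reduction of $E_L$ with trivial Tamagawa number; therefore $c(E_L/L)=c(E/\mathbb{Q})^2$, and substituting gives precisely $c(E/\mathbb{Q})\cdot m\cdot u_L\cdot\sqrt{|\Sha(E_L/L)|}$. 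The places where a power of $2$ could a priori intrude --- matching the Petersson/period normalizations, the Manin-constant normalization, and the $2$-power ambiguity in relating $\Sha(E_L/L)$ to $\Sha(E/\mathbb{Q})\times\Sha(E^{d_L}/\mathbb{Q})$ --- are exactly where Corollary \ref{corollaryheegner} pins down the torsion factor $|E_L(L)_{\mathrm{tors}}|$, so that no spurious odd prime can appear.

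The main obstacle is that the step invoking the full Birch--Swinnerton-Dyer formula for $E_L/L$ is not unconditional: although the finiteness of $\Sha(E_L/L)$ and the $p$-part of the formula are known for many primes $p$ by the work of Kato, of Skinner--Urban, and of others built on Kolyvagin, the exact order of $\Sha(E_L/L)$ is precisely what is in question here. All the remaining ingredients --- the Gross--Zagier height formula, the index computation, the modular-degree--period identity, and the Tamagawa comparison via the Heegner hypothesis --- are either classical or furnished by the results of the preceding sections; so in effect this argument reduces Conjecture \ref{fullgrosszagierconjecture} to the rank one case of the Birch--Swinnerton-Dyer conjecture for $E_L/L$.
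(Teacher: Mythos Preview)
The statement you are trying to prove is not a theorem of the paper but a \emph{conjecture}: it is quoted verbatim from Gross and Zagier and the paper makes no attempt to establish it. There is therefore no ``paper's own proof'' to compare your attempt against. What the paper actually does with this conjecture is record its weaker consequence (Conjecture~\ref{weakgrosszagierconjecture}), cite the Byeon--Kim--Yhee theorem settling that weaker statement up to a power of~$2$, and then use the latter as a black box inside Corollary~\ref{corollaryheegner}.

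Your argument is not a proof but a well-known heuristic reduction: Gross--Zagier plus the exact rank-one Birch--Swinnerton-Dyer formula for $E_L/L$ formally gives the index formula. You yourself identify the obstacle correctly in your final paragraph --- the step invoking full BSD is conditional, and that is precisely why the statement remains a conjecture. So what you have written is an explanation of why Conjecture~\ref{fullgrosszagierconjecture} follows from BSD, not a proof of Conjecture~\ref{fullgrosszagierconjecture}.

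One further point: your appeal to Corollary~\ref{corollaryheegner} is both logically out of order and mathematically idle. It is out of order because in the paper Corollary~\ref{corollaryheegner} is proved \emph{after} Conjecture~\ref{fullgrosszagierconjecture} is stated, and its last clauses rely on Theorem~\ref{byeonyheekimtheorem}, which in turn addresses the weaker Conjecture~\ref{weakgrosszagierconjecture}. It is idle because nothing in your index computation actually uses that $|E_L(L)_{\mathrm{tors}}|/|E(\mathbb{Q})_{\mathrm{tors}}|$ is a $2$-power: once you assume BSD, the torsion factor $|E_L(L)_{\mathrm{tors}}|$ cancels cleanly between the regulator/index relation and the BSD formula regardless of its prime factorization, and the delicate normalization issues you allude to (periods, Manin constant, Petersson norm) are not resolved by knowing anything about odd torsion.
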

 
 Since the index of $P_L$ in $E_L(L)$ is divisible by  $|E(\mathbb{Q})_{\text{tors}}|$, Conjecture \ref{fullgrosszagierconjecture} implies the following weaker conjecture.
 
 \begin{conjecture}\label{weakgrosszagierconjecture} (\cite[Conjecture (2.3) in Section V]{grosszagierpaper})
 If $P_L$ has infinite order in $E_L(L)$, then $|E(\mathbb{Q})_{\text{tors}}|$ divides $m\cdot c(E/\mathbb{Q}) \cdot u_L \cdot \sqrt{|\Sha(E_L/L)|}$.
 \end{conjecture}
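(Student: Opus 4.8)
The plan is to establish Conjecture~\ref{weakgrosszagierconjecture} (which presupposes that $L$ satisfies the Heegner hypothesis for $E/\mathbb{Q}$) by reducing the asserted divisibility to a prime-by-prime comparison and then feeding in the torsion-growth results proved above; the genuinely delicate primes are $3$, $5$, $7$ --- the ``Eisenstein'' primes that can divide $|E(\mathbb{Q})_{\mathrm{tors}}|$ --- and, for the prime $3$, the hypothesis $L\neq\mathbb{Q}(\sqrt{-3})$ that already figures in those results is exactly what is needed, the excluded field being the one for which the factor $u_L=3$ makes the $3$-part of the conjecture hold for free.

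First I would extract the standard consequences of the hypothesis that $P_L$ has infinite order: by the Gross--Zagier formula together with Kolyvagin's Euler system, $E_L/L$ then has Mordell--Weil rank one and $\Sha(E_L/L)$ is finite, so $E_L(L)\cong\mathbb{Z}\oplus E_L(L)_{\mathrm{tors}}$ and the Heegner index $I_L:=[E_L(L):\mathbb{Z}P_L]$ is an integer multiple of $|E_L(L)_{\mathrm{tors}}|$; in particular $|E(\mathbb{Q})_{\mathrm{tors}}|$ divides $|E_L(L)_{\mathrm{tors}}|$, hence divides $I_L$. I would also record the purely local input that, under the Heegner hypothesis, every bad prime of $E/\mathbb{Q}$ splits in $L$ while every good prime remains good, so that $\prod_w c_w(E_L/L)=c(E/\mathbb{Q})^2$. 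It then suffices to check, for each prime $\ell$, that $v_\ell\!\left(|E(\mathbb{Q})_{\mathrm{tors}}|\right)\le v_\ell\!\left(m\cdot c(E/\mathbb{Q})\cdot u_L\cdot\sqrt{|\Sha(E_L/L)|}\right)$, and by Mazur's classification of the torsion of elliptic curves over $\mathbb{Q}$ the only primes with positive left-hand side are $2,3,5,7$.

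I would treat these in turn. For $\ell=2$ one uses $u_L\in\{1,2\}$ (which holds because $L\neq\mathbb{Q}(\sqrt{-3})$) together with the known $2$-adic estimates for the Manin constant and for $\prod_w c_w(E_L/L)$ in terms of the $2$-torsion, which should be essentially routine. For an odd prime $\ell\in\{3,5,7\}$ dividing $|E(\mathbb{Q})_{\mathrm{tors}}|$, the curve $E$ --- hence $E_L/L$ --- has a rational point $P$ of order $\ell$; Theorem~\ref{theoremramification} (when $\ell=5,7$, since $v_\ell(\ell)=1<\frac{\ell-1}{2}$) or Theorem~\ref{theoremtorsiongrowth}(ii) (when $\ell=3$, using that an imaginary quadratic field other than $\mathbb{Q}(\sqrt{-3})$ has a ramified prime distinct from $3$) shows that $E(L)[\ell]=E(\mathbb{Q})[\ell]$, which an elementary Galois-descent argument upgrades to $E(L)[\ell^\infty]=E(\mathbb{Q})[\ell^\infty]$. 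I would then run an explicit descent along the $\ell$-isogeny $E\to E/\langle P\rangle$ and its base change to $L$: the role of the non-growth is to force the $\ell$-part of the Selmer group of $E_L/L$ to be controlled by local terms, so that the factor $\ell^{\,v_\ell(|E(\mathbb{Q})_{\mathrm{tors}}|)}$ we must exhibit is pushed into $c(E/\mathbb{Q})$ (via component groups at the split bad primes), into $m$ (when $E$ is not the strong Weil curve in its isogeny class), or into $\sqrt{|\Sha(E_L/L)|}$.

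The main obstacle --- where essentially all the work lies --- is exactly this last step for the Eisenstein primes $\ell\in\{3,5,7\}$: since $\bar\rho_{E,\ell}$ is reducible, Kolyvagin's theorem gives nothing, and one genuinely has to manufacture the missing power of $\ell$ inside $\sqrt{|\Sha(E_L/L)|}$ out of the non-growth of $\ell$-torsion over $L$. Making the $\ell$-isogeny descent over $L$ completely explicit and verifying that the no-growth statement supplies precisely the inequality transferring the deficit onto $\Sha$ is the technical heart of the argument; the prime $3$ carries the extra subtlety that the non-growth input itself requires $L\neq\mathbb{Q}(\sqrt{-3})$. The prime $2$, by contrast, should be comparatively short.
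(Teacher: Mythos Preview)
The statement you are trying to prove is labelled a \emph{Conjecture} in the paper, and the paper does not prove it. What the paper does is: (a) quote the partial result of Byeon, Yhee, and Kim (Theorem~\ref{byeonyheekimtheorem}), which settles the conjecture whenever $E(\mathbb{Q})_{\mathrm{tors}}\not\cong\mathbb{Z}/2\mathbb{Z},\,\mathbb{Z}/4\mathbb{Z}$; and (b) prove Corollary~\ref{corollaryheegner}, which is a statement about $|E_L(L)_{\mathrm{tors}}|$ rather than about $|E(\mathbb{Q})_{\mathrm{tors}}|$, under the extra hypothesis $L\neq\mathbb{Q}(\sqrt{-3})$. Nowhere does the paper claim to establish Conjecture~\ref{weakgrosszagierconjecture} in full.

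Your proposal is accordingly not a proof but a programme, and you essentially say so yourself: the step you call ``the technical heart of the argument'' --- running an explicit $\ell$-isogeny descent over $L$ for the Eisenstein primes $\ell\in\{3,5,7\}$ and showing that non-growth of $\ell$-torsion forces the missing power of $\ell$ into $c(E/\mathbb{Q})$, $m$, or $\sqrt{|\Sha(E_L/L)|}$ --- is precisely the substantial content of the Byeon--Yhee--Kim papers, not something the torsion-growth results of this paper supply. The paper's results feed into Corollary~\ref{corollaryheegner} only through Theorem~\ref{byeonyheekimtheorem}, via the trivial observation that once $|E_L(L)_{\mathrm{tors}}|/|E(\mathbb{Q})_{\mathrm{tors}}|$ is a $2$-power, divisibility for $|E(\mathbb{Q})_{\mathrm{tors}}|$ transfers to $|E_L(L)_{\mathrm{tors}}|$ up to a $2$-power. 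There is no independent descent argument in the paper.

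Two further slips: you import the hypothesis $L\neq\mathbb{Q}(\sqrt{-3})$ from Corollary~\ref{corollaryheegner} into your argument, but Conjecture~\ref{weakgrosszagierconjecture} itself carries no such restriction (and for $L=\mathbb{Q}(\sqrt{-3})$ one has $u_L=3$, not $u_L\in\{1,2\}$). And your dismissal of $\ell=2$ as ``comparatively short'' is exactly backwards: the cases $E(\mathbb{Q})_{\mathrm{tors}}\cong\mathbb{Z}/2\mathbb{Z}$ and $\mathbb{Z}/4\mathbb{Z}$ are the ones left open by Byeon--Yhee--Kim.
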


Very recently Byeon, Yhee, and Kim (see \cite{bky} and \cite{bky2}) have proved the following theorem, which settles Conjecture \ref{weakgrosszagierconjecture} up to a power of $2$.

\begin{theorem}\label{byeonyheekimtheorem}
If $E/\mathbb{Q}$ is an elliptic curve with $E(\mathbb{Q})_{\text{tors}} \ncong \mathbb{Z}/2\mathbb{Z}$ or $\mathbb{Z}/4\mathbb{Z}$, then Conjecture \ref{weakgrosszagierconjecture} is true.
\end{theorem}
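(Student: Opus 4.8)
The plan is to follow the strategy of Byeon, Yhee, and Kim (see \cite{bky} and \cite{bky2}): first establish the full index conjecture \ref{fullgrosszagierconjecture} \emph{up to a power of $2$}, and then remove the remaining ambiguity using the hypothesis on $E(\mathbb{Q})_{\mathrm{tors}}$. The first step I would take is an elementary reduction. Since $P_L$ is assumed to have infinite order, $\mathbb{Z}P_L \subseteq E_L(L)$ is free of rank one, so $E(\mathbb{Q})_{\mathrm{tors}} \hookrightarrow E_L(L)_{\mathrm{tors}}$ injects into the finite group $E_L(L)/\mathbb{Z}P_L$, and hence $|E(\mathbb{Q})_{\mathrm{tors}}|$ divides the index $[E_L(L):\mathbb{Z}P_L]$. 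It therefore suffices to show that, up to a power of $2$, the index $[E_L(L):\mathbb{Z}P_L]$ divides $c(E/\mathbb{Q})\cdot m \cdot u_L \cdot \sqrt{|\Sha(E_L/L)|}$, and then, separately, to bound the $2$-adic valuation of $|E(\mathbb{Q})_{\mathrm{tors}}|$ by that of the right-hand side.

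For the main (odd) part I would use the factorization $L(E_L/L,s) = L(E/\mathbb{Q},s)\cdot L(E^{d}/\mathbb{Q},s)$ for $L = \mathbb{Q}(\sqrt{d})$, together with the Gross-Zagier height formula, which expresses $\hat{h}(P_L)$ as an explicit rational multiple --- involving the Manin constant $m$ and the discriminant of $L$ --- of $L'(E/\mathbb{Q},1)\cdot L(E^{d}/\mathbb{Q},1)$. Because $P_L$ has infinite order, both factors are nonzero, so $E/\mathbb{Q}$ has analytic rank $1$ and $E^{d}/\mathbb{Q}$ has analytic rank $0$. By the theorems of Gross-Zagier and Kolyvagin the Birch-Swinnerton-Dyer formula then holds for $E/\mathbb{Q}$ and for $E^{d}/\mathbb{Q}$ away from the prime $2$ (and for the odd primes $3$, $5$, $7$ --- the only ones that can divide $|E(\mathbb{Q})_{\mathrm{tors}}|$ by Mazur's theorem --- the available strengthenings of Kolyvagin's argument are enough). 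Multiplying the two BSD formulas, substituting the Gross-Zagier expression for $\hat{h}(P_L)$, and using $\mathrm{Reg}(E_L/L) = \hat{h}(P_L)/[E_L(L):\mathbb{Z}P_L]^2$ to cancel the regulator, I would arrive at Conjecture \ref{fullgrosszagierconjecture} up to a power of $2$, which already gives Conjecture \ref{weakgrosszagierconjecture} up to a power of $2$.

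The hard part, and the only place the hypothesis enters, is the remaining $2$-adic bookkeeping: one must verify that $\mathrm{ord}_2 |E(\mathbb{Q})_{\mathrm{tors}}|$ does not exceed the $2$-adic valuation of $c(E/\mathbb{Q})\cdot m \cdot u_L \cdot \sqrt{|\Sha(E_L/L)|}$. This requires control of the $2$-part of $\Sha(E_L/L)$ (through $2$-descent on $E_L/L$ and the structure of the Cassels-Tate pairing), of the $2$-part of the Tamagawa product $c(E/\mathbb{Q})$, and of $\mathrm{ord}_2(m)$. By Mazur's classification, in every case other than $E(\mathbb{Q})_{\mathrm{tors}} \cong \mathbb{Z}/2\mathbb{Z}$ and $\mathbb{Z}/4\mathbb{Z}$ the torsion subgroup is rigid enough --- either it has no rational $2$-torsion, or it contains the full $2$-torsion, or its $2$-torsion sits inside a cyclic group of order at least $8$ --- for these $2$-adic estimates to go through; the two excluded cases are exactly those where the known results on the $2$-part of Birch-Swinnerton-Dyer are not yet sufficient, which is why Conjecture \ref{weakgrosszagierconjecture} remains open for them. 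Combining the odd and $2$-parts gives the claimed divisibility, and hence the theorem.
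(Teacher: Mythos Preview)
The paper does not contain a proof of this theorem; it is quoted as a result of Byeon, Yhee, and Kim with references \cite{bky} and \cite{bky2}, and is used only as a black box in the subsequent Corollary~\ref{corollaryheegner}. There is therefore no proof in the paper to compare your proposal against.

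Your proposal is not a proof but a high-level outline of what you believe the cited argument looks like. As a sketch it is plausible in overall shape --- the Gross--Zagier height formula combined with Kolyvagin-type input and a case analysis at the prime $2$ is indeed the flavor of such arguments --- but several steps are overstated or left without content. The claim that ``the Birch--Swinnerton-Dyer formula then holds for $E/\mathbb{Q}$ and for $E^{d}/\mathbb{Q}$ away from the prime $2$'' is stronger than what Gross--Zagier and Kolyvagin alone yield, and you do not say which additional results you are invoking. More seriously, the entire $2$-adic paragraph is a plausibility narrative rather than an argument: you give no concrete mechanism distinguishing the excluded cases $\mathbb{Z}/2\mathbb{Z}$ and $\mathbb{Z}/4\mathbb{Z}$ from, say, $\mathbb{Z}/2\mathbb{Z}\times\mathbb{Z}/2\mathbb{Z}$ or $\mathbb{Z}/8\mathbb{Z}$ at the level of Tamagawa numbers, the Manin constant, or $\Sha[2^\infty]$. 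If you intend to supply a genuine proof you should work through \cite{bky} and \cite{bky2} directly; for the purposes of the present paper, the citation is all that is expected.
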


We end this section with the following corollary.

\begin{corollary}\label{corollaryheegner}
Let $E/\mathbb{Q}$ be an elliptic curve and $L \neq \mathbb{Q}(\sqrt{-3})$ be an imaginary quadratic field satisfying the Heegner hypothesis for $E/\mathbb{Q}$. Assume that the Heegner point $P_L$ has infinite order in $E_L(L)$. Then
\begin{enumerate}
    \item The quotient $|E_L(L)_{\mathrm{tors}}|/ |E(\mathbb{Q})_{\mathrm{tors}}|$ is equal to a power of $2$. In particular, $|E_L(L)_{\text{tors}}|$ divides $m\cdot c(E/\mathbb{Q}) \cdot u_L \cdot \sqrt{|\Sha(E_L/L)|}$, up to a power of $2$.
    \item If $E(\mathbb{Q})[2] \cong \{0\}$, then $|E_L(L)_{\text{tors}}| = |E(\mathbb{Q})_{\text{tors}}|$. In particular, $|E_L(L)_{\text{tors}}|$ divides $m\cdot c(E/\mathbb{Q}) \cdot u_L \cdot \sqrt{|\Sha(E_L/L)|}$.
\end{enumerate}

\end{corollary}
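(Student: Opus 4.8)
The plan is to deduce everything from Theorem \ref{theoremtorsiongrowth} and the Byeon--Yhee--Kim result (Theorem \ref{byeonyheekimtheorem}), the only genuinely new input being an elementary remark on ramification in imaginary quadratic fields. First I would check that we are in the situation of Theorem \ref{theoremtorsiongrowth}: every prime of bad reduction of $E/\mathbb{Q}$ divides the conductor $N$, and the Heegner hypothesis says each such prime splits in $L$, so in particular is unramified in $L$. Next I would use $L \neq \mathbb{Q}(\sqrt{-3})$ to produce a prime $p \neq 3$ ramifying in $L$: writing $L = \mathbb{Q}(\sqrt{m})$ with $m < 0$ squarefree, the discriminant of $L$ equals $m$ when $m \equiv 1 \pmod 4$ and $4m$ otherwise, so if no prime other than $3$ ramified in $L$ we would be forced to have $m = -3$. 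With such a $p$, Theorem \ref{theoremtorsiongrowth}(ii) immediately gives that $|E_L(L)_{\mathrm{tors}}|/|E(\mathbb{Q})_{\mathrm{tors}}|$ is a power of $2$, which is the first assertion of part (i).

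For the ``in particular'' clause of part (i) I would split into two cases. If $E(\mathbb{Q})_{\mathrm{tors}}$ is not isomorphic to $\mathbb{Z}/2\mathbb{Z}$ or $\mathbb{Z}/4\mathbb{Z}$, then since $P_L$ has infinite order, Theorem \ref{byeonyheekimtheorem} yields that $|E(\mathbb{Q})_{\mathrm{tors}}|$ divides $m \cdot c(E/\mathbb{Q}) \cdot u_L \cdot \sqrt{|\Sha(E_L/L)|}$, and combining this with the first assertion of part (i) shows that $|E_L(L)_{\mathrm{tors}}|$ divides the same quantity up to a power of $2$. If instead $E(\mathbb{Q})_{\mathrm{tors}} \cong \mathbb{Z}/2\mathbb{Z}$ or $\mathbb{Z}/4\mathbb{Z}$, then $|E(\mathbb{Q})_{\mathrm{tors}}|$ is a power of $2$ and hence so is $|E_L(L)_{\mathrm{tors}}|$ by the first assertion, so the divisibility up to a power of $2$ is automatic.

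For part (ii) I would first show that $E(\mathbb{Q})[2] = \{0\}$ forces $E(L)[2] = \{0\}$: if $Q \in E(L)$ had order $2$ and $\sigma$ generates $\mathrm{Gal}(L/\mathbb{Q})$, then $Q + Q^{\sigma}$ is fixed by $\sigma$ and killed by $2$, hence lies in $E(\mathbb{Q})[2] = \{0\}$, so $Q^{\sigma} = -Q = Q$ and therefore $Q \in E(\mathbb{Q})[2] = \{0\}$, a contradiction. (Equivalently, the $2$-division polynomial is a cubic with no rational root, hence irreducible over $\mathbb{Q}$, hence still without a root in the quadratic field $L$.) Thus $|E_L(L)_{\mathrm{tors}}|$ is odd; since $|E(\mathbb{Q})_{\mathrm{tors}}|$ is also odd (again because $E(\mathbb{Q})[2] = \{0\}$) and the two orders differ only by a power of $2$ by the first assertion of part (i), they must be equal. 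Being odd, $E(\mathbb{Q})_{\mathrm{tors}}$ is neither $\mathbb{Z}/2\mathbb{Z}$ nor $\mathbb{Z}/4\mathbb{Z}$, so Theorem \ref{byeonyheekimtheorem} applies and gives that $|E_L(L)_{\mathrm{tors}}| = |E(\mathbb{Q})_{\mathrm{tors}}|$ divides $m \cdot c(E/\mathbb{Q}) \cdot u_L \cdot \sqrt{|\Sha(E_L/L)|}$.

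I do not expect a serious obstacle: all the steps are short once Theorems \ref{theoremtorsiongrowth} and \ref{byeonyheekimtheorem} are available. The only points requiring a little care are the elementary classification showing that excluding $\mathbb{Q}(\sqrt{-3})$ guarantees a ramified prime other than $3$, and the bookkeeping in the cases $E(\mathbb{Q})_{\mathrm{tors}} \cong \mathbb{Z}/2\mathbb{Z}$ or $\mathbb{Z}/4\mathbb{Z}$, where Theorem \ref{byeonyheekimtheorem} is silent but the desired divisibility holds trivially because the torsion order is already a power of $2$.
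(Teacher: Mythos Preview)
Your proof is correct and follows essentially the same route as the paper: verify the hypotheses of Theorem \ref{theoremtorsiongrowth} via the Heegner hypothesis, use $L \neq \mathbb{Q}(\sqrt{-3})$ to find a ramified prime $p \neq 3$, apply Theorem \ref{theoremtorsiongrowth}(ii), and finish with Theorem \ref{byeonyheekimtheorem}. Your treatment is in fact slightly more careful than the paper's: you explicitly dispose of the cases $E(\mathbb{Q})_{\mathrm{tors}} \cong \mathbb{Z}/2\mathbb{Z}$ or $\mathbb{Z}/4\mathbb{Z}$ where Theorem \ref{byeonyheekimtheorem} does not apply, and you give a direct Galois/division-polynomial argument for $E(\mathbb{Q})[2]=\{0\}\Rightarrow E(L)[2]=\{0\}$ in place of the paper's citation of \cite[Lemma 1.4(i)]{laskalorenz}.
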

\begin{proof}
{\it Proof of Part $(i)$:} We first show that the quotient $|E_L(L)_{\mathrm{tors}}|/ |E(\mathbb{Q})_{\mathrm{tors}}|$ is a power of $2$. Write $L=\mathbb{Q}(\sqrt{d})$ for $d \in \mathbb{Z}$ square-free. Using Part $(i)$ of Theorem \ref{theoremtorsiongrowth} we find that $E_L(L)_{\mathrm{tors}}\setminus E(\mathbb{Q})_{\mathrm{tors}}$ cannot contain points of order $p >3$. If $d \neq \pm 3$, then there exists a prime $p \neq 3$ which ramifies in $L$. Therefore, using Part $(i)$ of Theorem \ref{theoremtorsiongrowth} we find that $E_L(L)_{\mathrm{tors}}\setminus E(\mathbb{Q})_{\mathrm{tors}}$ cannot contain points of order $3$. Since the case $d=3$ corresponds to a real quadratic field, we have proved that the quotient $|E_L(L)_{\mathrm{tors}}|/ |E(\mathbb{Q})_{\mathrm{tors}}|$ is a power of $2$. The last assertion of this part follows now immediately from Theorem \ref{byeonyheekimtheorem}.

{\it Proof of Part $(ii)$:} If $2$ divides $|E_L(L)_{\text{tors}}|$, then it follows from \cite[Part $(i)$ of Lemma 1.4]{laskalorenz} that $E(\mathbb{Q})[2] \ncong \{ \mathcal{O} \}$. Therefore, our assumption implies that $|E_L(L)_{\text{tors}}|$ is odd. However, by the previous part we know that the quotient $|E_L(L)_{\mathrm{tors}}|/ |E(\mathbb{Q})_{\mathrm{tors}}|$ is a power of $2$. Thus, we have that $|E_L(L)_{\text{tors}}| = |E(\mathbb{Q})_{\text{tors}}|$. Finally, the last assertion follows immediately from Theorem \ref{byeonyheekimtheorem}. This proves our corollary.
\end{proof}

\bibliographystyle{plain}
\bibliography{bibliography.bib}

\begin{thebibliography}{10}

\bibitem{sga7}
{\em Groupes de monodromie en g\'{e}om\'{e}trie alg\'{e}brique. {I}}.
\newblock Lecture Notes in Mathematics, Vol. 288. Springer-Verlag, Berlin-New
  York, 1972.
\newblock S\'{e}minaire de G\'{e}om\'{e}trie Alg\'{e}brique du Bois-Marie
  1967--1969 (SGA 7 I), Dirig\'{e} par A. Grothendieck. Avec la collaboration
  de M. Raynaud et D. S. Rim.

\bibitem{neronmodelsbook}
S.~Bosch, W.~L\"{u}tkebohmert, and M.~Raynaud.
\newblock {\em N\'{e}ron models}, volume~21 of {\em Ergebnisse der Mathematik
  und ihrer Grenzgebiete (3) [Results in Mathematics and Related Areas (3)]}.
\newblock Springer-Verlag, Berlin, 1990.

\bibitem{bcdt}
C.~Breuil, B.~Conrad, F.~Diamond, and R.~Taylor.
\newblock On the modularity of elliptic curves over {$\bold Q$}: wild 3-adic
  exercises.
\newblock {\em J. Amer. Math. Soc.}, 14(4):843--939, 2001.

\bibitem{bky}
D.~Byeon, T.~Kim, and D.~Yhee.
\newblock A conjecture of {G}ross and {Z}agier: case {$E(\Bbb Q)_{\rm
  tor}\cong\Bbb Z/3\Bbb Z$}.
\newblock {\em Int. J. Number Theory}, 15(9):1793--1800, 2019.

\bibitem{bky2}
D.~Byeon, T.~Kim, and D.~Yhee.
\newblock A conjecture of {G}ross and {Z}agier: case {$E(\Bbb Q)_{\rm tor}\cong
  \Bbb Z/{\bf 2}\Bbb Z\oplus \Bbb Z/{\bf 2}\Bbb Z, \Bbb Z/{\bf 2}\Bbb
  Z\oplus\Bbb Z/{\bf 4}\Bbb Z$} or {$\Bbb Z/{\bf 2}\Bbb Z\oplus\Bbb Z/{\bf
  6}\Bbb Z$}.
\newblock {\em Int. J. Number Theory}, 16(7):1567--1572, 2020.

\bibitem{cx}
P.~L. Clark and X.~Xarles.
\newblock Local bounds for torsion points on abelian varieties.
\newblock {\em Canad. J. Math.}, 60(3):532--555, 2008.

\bibitem{com}
S.~Comalada.
\newblock Twists and reduction of an elliptic curve.
\newblock {\em J. Number Theory}, 49(1):45--62, 1994.

\bibitem{con}
B.~Conrad.
\newblock A modern proof of {C}hevalley's theorem on algebraic groups.
\newblock {\em J. Ramanujan Math. Soc.}, 17(1):1--18, 2002.

\bibitem{cremonabook}
J.~E. Cremona.
\newblock {\em Algorithms for modular elliptic curves}.
\newblock Cambridge University Press, Cambridge, second edition, 1997.

\bibitem{gonzalezjimeneztornero}
E.~Gonz\'{a}lez-Jim\'{e}nez and J.~M. Tornero.
\newblock Torsion of rational elliptic curves over quadratic fields.
\newblock {\em Rev. R. Acad. Cienc. Exactas F\'{\i}s. Nat. Ser. A Mat. RACSAM},
  108(2):923--934, 2014.

\bibitem{gonzalezjimeneztorneroII}
E.~Gonz\'{a}lez-Jim\'{e}nez and J.~M. Tornero.
\newblock Torsion of rational elliptic curves over quadratic fields {II}.
\newblock {\em Rev. R. Acad. Cienc. Exactas F\'{\i}s. Nat. Ser. A Mat. RACSAM},
  110(1):121--143, 2016.

\bibitem{gouveamazur}
F.~Gouv\^{e}a and B.~Mazur.
\newblock The square-free sieve and the rank of elliptic curves.
\newblock {\em J. Amer. Math. Soc.}, 4(1):1--23, 1991.

\bibitem{grosszagierpaper}
B.~H. Gross and D.~B. Zagier.
\newblock Heegner points and derivatives of {$L$}-series.
\newblock {\em Invent. math.}, 84(2):225--320, 1986.

\bibitem{kamienny}
S.~Kamienny.
\newblock Torsion points on elliptic curves and {$q$}-coefficients of modular
  forms.
\newblock {\em Invent. Math.}, 109(2):221--229, 1992.

\bibitem{kenkumomose}
M.~A. Kenku and F.~Momose.
\newblock Torsion points on elliptic curves defined over quadratic fields.
\newblock {\em Nagoya Math. J.}, 109:125--149, 1988.

\bibitem{laskalorenz}
M.~Laska and M.~Lorenz.
\newblock Rational points on elliptic curves over {${\bf Q}$} in elementary
  abelian {$2$}-extensions of {${\bf Q}$}.
\newblock {\em J. Reine Angew. Math.}, 355:163--172, 1985.

\bibitem{liedtkeschroer}
C.~Liedtke and S.~Schr\"{o}er.
\newblock The {N}\'{e}ron model over the {I}gusa curves.
\newblock {\em J. Number Theory}, 130(10):2157--2197, 2010.

\bibitem{lmfdb}
The {LMFDB Collaboration}.
\newblock The {L}-functions and modular forms database.
\newblock \url{http://www.lmfdb.org}, 2019.
\newblock [Online; accessed 13 March 2023].

\bibitem{lorenzinitorsionandexceptionalunits}
D.~Lorenzini.
\newblock Torsion and exceptional units.
\newblock Preprint.

\bibitem{neronmodelslorenzini}
D.~Lorenzini.
\newblock N\'{e}ron models.
\newblock {\em Eur. J. Math.}, 3(2):171--198, 2017.

\bibitem{mathoverflowpost}
Math{O}verflow.
\newblock Torsion subgroups in families of twists of elliptic curves.
\newblock
  \url{https://mathoverflow.net/questions/76413/torsion-subgroups-in-families-of-twists-of-elliptic-curves}.
\newblock [Online; accessed 13 March 2023].

\bibitem{maz}
B.~Mazur.
\newblock Modular curves and the {E}isenstein ideal.
\newblock {\em Inst. Hautes \'{E}tudes Sci. Publ. Math.}, (47):33--186 (1978),
  1977.
\newblock With an appendix by Mazur and M. Rapoport.

\bibitem{mazurrubin07}
B.~Mazur and K.~Rubin.
\newblock Finding large {S}elmer rank via an arithmetic theory of local
  constants.
\newblock {\em Ann. of Math. (2)}, 166(2):579--612, 2007.

\bibitem{mazurrubin10}
B.~Mazur and K.~Rubin.
\newblock Ranks of twists of elliptic curves and {H}ilbert's tenth problem.
\newblock {\em Invent. Math.}, 181(3):541--575, 2010.

\bibitem{mazurrubinsilverberg}
B.~Mazur, K.~Rubin, and A.~Silverberg.
\newblock Twisting commutative algebraic groups.
\newblock {\em J. Algebra}, 314(1):419--438, 2007.

\bibitem{mentzelosagasheconjecture}
M.~Melistas.
\newblock On a conjecture of {A}gashe.
\newblock {\em Trans. Amer. Math. Soc.}, 374(10):7143--7160, 2021.

\bibitem{mentzeloskodairandtorsion}
M.~Melistas.
\newblock Purely additive reduction of abelian varieties with torsion.
\newblock {\em J. Number Theory}, 2022.
\newblock https://doi.org/10.1016/j.jnt.2021.10.015.

\bibitem{merel1996}
L.~Merel.
\newblock Bornes pour la torsion des courbes elliptiques sur les corps de
  nombres.
\newblock {\em Invent. Math.}, 124(1-3):437--449, 1996.

\bibitem{milneonthearithmeticofabelianvarieties}
J.~S. Milne.
\newblock On the arithmetic of abelian varieties.
\newblock {\em Invent. Math.}, 17:177--190, 1972.

\bibitem{najmantwists}
F.~Najman.
\newblock The number of twists with large torsion of an elliptic curve.
\newblock {\em Rev. R. Acad. Cienc. Exactas F\'{\i}s. Nat. Ser. A Mat. RACSAM},
  109(2):535--547, 2015.

\bibitem{neukirchbook}
J.~Neukirch.
\newblock {\em Algebraic number theory}, volume 322 of {\em Grundlehren der
  mathematischen Wissenschaften [Fundamental Principles of Mathematical
  Sciences]}.
\newblock Springer-Verlag, Berlin, 1999.
\newblock Translated from the 1992 German original and with a note by Norbert
  Schappacher, With a foreword by G. Harder.

\bibitem{serregaloiscohomology}
J.~P. Serre.
\newblock {\em Galois cohomology}.
\newblock Springer Monographs in Mathematics. Springer-Verlag, Berlin, english
  edition, 2002.
\newblock Translated from the French by Patrick Ion and revised by the author.

\bibitem{st}
J.~P. Serre and J.~Tate.
\newblock Good reduction of abelian varieties.
\newblock {\em Ann. of Math. (2)}, 88:492--517, 1968.

\bibitem{silverberg08}
A.~Silverberg.
\newblock Applications to cryptography of twisting commutative algebraic
  groups.
\newblock {\em Discrete Appl. Math.}, 156(16):3122--3138, 2008.

\bibitem{silverman1984}
J.~H. Silverman.
\newblock Lower bounds for height functions.
\newblock {\em Duke Math. J.}, 51(2):395--403, 1984.

\bibitem{aec}
J.~H. Silverman.
\newblock {\em The arithmetic of elliptic curves}, volume 106 of {\em Graduate
  Texts in Mathematics}.
\newblock Springer, Dordrecht, second edition, 2009.

\bibitem{taylorwiles}
R.~Taylor and A.~Wiles.
\newblock Ring-theoretic properties of certain {H}ecke algebras.
\newblock {\em Ann. of Math. (2)}, 141(3):553--572, 1995.

\bibitem{sagemath}
{The Sage Developers}.
\newblock {\em {S}ageMath, the {S}age {M}athematics {S}oftware {S}ystem
  ({V}ersion 9.0)}, 2020.
\newblock {\tt https://www.sagemath.org}.

\bibitem{wiles}
A.~Wiles.
\newblock Modular elliptic curves and {F}ermat's last theorem.
\newblock {\em Ann. of Math. (2)}, 141(3):443--551, 1995.

\end{thebibliography}

\end{document}